\newtheorem{thm}{Theorem}[section]
\newtheorem{lem}[thm]{Lemma}
\newtheorem{prop}[thm]{Proposition}
\newtheorem{claim}[thm]{Claim}
\theoremstyle{definition}
\newtheorem{example}[thm]{Example}
\newtheorem{rmk}[thm]{Remark}
\newtheorem{defn}[thm]{Definition}
\newenvironment{axiom}{\begin{list}{$\bullet$}{\setlength{\labelsep}{.7cm}
\setlength{\leftmargin}{2.5cm}\setlength{\rightmargin}{0cm}%
\setlength{\labelwidth}{1.8cm}\setlength{\itemsep}{0pt}}}{\end{list}}
\newcommand{\ax}[1]{\item[{\bf #1}\hfill]\index{#1}}
\def\bN{\mathbb{N}}
\def\bQ{\mathbb{Q}}
\def\bR{\mathbb{R}}
\newcommand{\cupdot}{\mathbin{\mathaccent\cdot\cup}}
\begin{document}

\title[Partial metric spaces with negative distances and fixed point theorems]{Partial metric spaces with negative distances \\and fixed point theorems}
\author{Samer Assaf}
\address{Department of Mathematics and Statistics,
University of Saskatchewan, 106 Wiggins Road,
Saskatoon, Saskatchewan, Canada S7N 5E6}
\email{ska680@mail.usask.ca}
\author{Koushik Pal}
\address{Department of Mathematics and Statistics,
University of Saskatchewan, 106 Wiggins Road,
Saskatoon, Saskatchewan, Canada S7N 5E6}
\email{koushik.pal@usask.ca}
\thanks{The second-named author is partially supported by a PIMS Postdoctoral Fellowship.}

\keywords{partial metric, strong partial metric, negative distance, Cauchy map, Cauchy mapping theorem, fixed point theorems}
\subjclass[2010]{Primary 47H10, Secondary 37C25, 54H25}

\begin{abstract}\noindent
In this paper we consider partial metric spaces in the sense of O'Neill. We introduce the notions of strong partial metric spaces and Cauchy functions. We prove a fixed point theorem for such spaces and functions that improves Matthews' contraction mapping theorem in two ways. First, the existence of fixed points now holds for a wider class of functions and spaces. Second, our theorem also allows for fixed points with nonzero self-distances. We also prove fixed point theorems for orbitally $r$-contractive and orbitally $\phi_r$-contractive maps. We then apply our results to give alternative proofs of some of the other known fixed point theorems in the context of partial metric spaces.
\end{abstract}
\maketitle

\section{Introduction}
The notion of {\em distance} is fundamental in mathematics and variations on distance have been much studied (see \cite{MMDED}). One such variation, the {\em partial metric}, was introduced by Matthews (see \cite{SM1, SM2}). It differs from a metric in that points are allowed to have nonzero ``self-distances'' (i.e., $d(x, x)\ge 0$) and the triangle inequality is modified to account for positive self-distance. The notion of a partial metric has been both fruitful and well-studied (see \url{http://www.dcs.warwick.ac.uk/pmetric/index.html}). 

O'Neill \cite{SJON} extended Matthews' definition to allow partial metrics with ``negative distances''. In this paper, we study partial metrics in the sense of O'Neill. If $X$ is a nonempty set and $\bR$ is the set of real numbers, then a {\bf partial metric}, in the sense of O'Neill, is a function $p:X\times X\to\bR$ satisfying for all $x, y, z\in X$:
%
\begin{axiom}
\ax{(sep)} $p(x, x) = p(x, y) = p(y, y) \iff x = y$,
\ax{(ssd)} $p(x, x)\le p(x, y)$,
\ax{(sym)} $p(x, y) = p(y, x)$,
\ax{(ptri)} $p(x, y)\le p(x, z) + p(z, y) - p(z, z)$.
\end{axiom}
These four conditions are called the {\em separation condition}, the {\em small self-distance condition}, the {\em symmetry condition} and the {\em partial metric triangle inequality condition}, respectively. 

The implications of Matthew's {\bf(ssd)} axiom was later studied by Heckmann \cite{RH} in his work on {\em weak partial metrics}, where he, in fact, drops that axiom. In this paper, we take a different route and strengthen the {\bf(ssd)} axiom and study its consequences. To that effect, we introduce the following definition.

\begin{defn}
Let $(X, p)$ be a partial metric space. We say $(X, p)$ is {\em strong} if conditions {\bf(sep)} and {\bf(ssd)} are replaced by the {\em strictly small self-distance condition}
\begin{axiom}
\ax{(sssd)} $p(x, x) < p(x, y)$ \;\;for all $x, y\in X$ with $x \not= y$.
\end{axiom}
And we say $(X, p)$ is {\em bounded below by $r_0$}, for some $r_0\in\bR$, if $(X, p)$ satisfies the {\em lower bound condition}
\begin{axiom}
\ax{(lbd)} $p(x, y) \ge r_0$ \;\;for all $x, y\in X$.
\end{axiom}
\end{defn}

Three remarks follow this definition. First, the authors intend to approach this paper more from a mathematical perspective than a computer science one. So the usual poset structure $(X, \sqsubseteq_p)$ on a partial metric space $(X, p)$ defined by $x \sqsubseteq_p y :\iff p(x, x) = p(x, y)$ is not relevant to the current paper. But for the sake of completeness, we should mention that a consequence of our axiom {\bf(sssd)} is that the relation $\sqsubseteq_p$ is, in fact, equality.

Second, it is easy to see that if $(X, p)$ is a partial metric space bounded below by $r_0$, then $(X, p_{r_0})$ is a partial metric space bounded below by zero, where $p_{r_0}(x, y) := p(x, y) - r_0$. That is to say, a ``bounded below partial metric'' can be linearly adjusted to obtain an equivalent partial metric. So this definition might not seem terribly interesting at the first sight. But the real power of this definition shows in the Theorems \ref{cauchyorbcontnexpfpt}, \ref{cauchyorbcontractivefpt} and \ref{phircontmapthm}, which are not mere linear adjustments of the corresponding theorems known in the context of partial metric spaces.

Third, it should be noted that even though we don't mention the ``{\em separation}'' condition {\bf(sep)} explicitly for strong partial metric spaces, it follows from condition {\bf(sssd)} : if $x \not= y$, then $p(x, x) < p(x, y)$; therefore, $p(x, x) = p(x, y)\implies x = y$. So, strong partial metric spaces are indeed partial metric spaces. Also, it is easy to see that a metric space is just a strong partial metric space with all self-distances zero. In short,
$$\{\mbox{Metric Spaces}\}\subseteq\{\mbox{Strong Partial Metric Spaces}\}\subseteq\{\mbox{Partial Metric Spaces}\}.$$
In Section 3, we provide examples to show that these inclusions are strict, cf. Remark~\ref{counterexample3}.

One of our main goals in this paper is to prove fixed point theorems for partial metric spaces. To do that, one usually needs to place some conditions on the function and/or on the underlying space. To that end, we use the following definitions given by Matthews  \cite{SM1, SM2}, although in his case (unlike ours) the number $r$ in the definition is necessarily nonnegative.

\begin{defn}		\label{cauchylimit}
Let $(X, p)$ be a partial metric space. A sequence $\langle x_n\rangle_{n\in\bN}$ is called a {\em Cauchy sequence in $(X, p)$} if there is some $r\in\bR$ such that
$$\lim_{m, n\to\infty}p(x_m, x_n) = r.$$
An element $a\in X$ is called a {\em limit} of the sequence $\langle x_n\rangle_{n\in\bN}$ if
$$\lim_{n\to\infty}p(a, x_n) = p(a, a).$$
An element $a\in X$ is called a {\em special limit} of the sequence $\langle x_n\rangle_{n\in\bN}$ if
$$\lim_{m, n\to\infty}p(x_m, x_n) = \lim_{n\to\infty}p(a, x_n) = p(a, a).$$
Finally, a partial metric space $(X, p)$ is called {\em complete} if every Cauchy sequence $\langle x_n\rangle_{n\in\bN}$ in $(X, p)$ converges to a special limit $a\in X$.
\end{defn}

Although a limit of a sequence is not unique in general in a partial metric space, we will show later that a special limit of a Cauchy sequence, if it exists, is in fact unique, cf.~Lemma~\ref{spllimitunique}. We also need some conditions on the function as given by the following definition. A remark about the notation: we write $fx$ for $f(x)$.

\begin{defn}		\label{contmap}
Let $(X, p)$ be a partial metric space and $f:X\to X$ be a map. We say $f$ is {\em non-expansive} if it satisfies
$$p(fx, fy)\le p(x, y)\;\;\;\;\;\;\forall x, y \in X.$$
If $(X, p)$ is bounded below by zero, we say $f$ is {\em contractive} if for some $0\le c < 1$ we have
$$p(fx, fy) \le c\,p(x, y)\;\;\;\;\;\;\forall x, y \in X.$$
\end{defn}

Matthews proved the following contraction mapping theorem for partial metric spaces.

\begin{thm}\cite[Theorem 5.3]{SM2}			\label{contmapthm}
For each complete partial metric space $(X, p)$ bounded below by zero, and for each contractive map $f:X\to X$, there exists a unique $a\in X$ such that $fa = a$ and, moreover, $p(a, a) = 0$.
\end{thm}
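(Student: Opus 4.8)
The plan is to mimic the classical Banach contraction argument, adapted to the partial metric setting via the notion of Cauchy sequence and special limit introduced in Definition~\ref{cauchylimit}. Fix any point $x_0 \in X$ and define the orbit $x_{n+1} = f x_n$ for all $n \in \bN$. First I would establish that $\langle x_n \rangle_{n\in\bN}$ is a Cauchy sequence in $(X,p)$. Using contractivity, $p(x_n, x_{n+1}) = p(f x_{n-1}, f x_n) \le c\, p(x_{n-1}, x_n)$, so by induction $p(x_n, x_{n+1}) \le c^n\, p(x_0, x_1)$; note here that $(X,p)$ bounded below by zero forces $p(x_0,x_1) \ge 0$, so these bounds are meaningful. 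Then, for $m < n$, iterating the partial metric triangle inequality \textbf{(ptri)} along the chain $x_m, x_{m+1}, \dots, x_n$ gives
\[
p(x_m, x_n) \le \sum_{k=m}^{n-1} p(x_k, x_{k+1}) - \sum_{k=m+1}^{n-1} p(x_k, x_k) \le \sum_{k=m}^{n-1} p(x_k, x_{k+1}) \le \frac{c^m}{1-c}\, p(x_0, x_1),
\]
where I used \textbf{(ssd)} (hence $p(x_k,x_k) \ge$ the lower bound zero, so the subtracted terms only help) together with the geometric bound. Combined with $0 \le p(x_m,x_m)$ and a lower estimate, this shows $\lim_{m,n\to\infty} p(x_m, x_n) = 0$; in particular the limit exists, so the sequence is Cauchy in the sense of Definition~\ref{cauchylimit}.

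Since $(X,p)$ is complete, the Cauchy sequence has a special limit $a \in X$, meaning $\lim_{m,n\to\infty} p(x_m, x_n) = \lim_{n\to\infty} p(a, x_n) = p(a,a)$; by the computation above this common value is $0$, so $p(a,a) = 0$ already. Next I would show $fa = a$. The idea is to estimate $p(fa, fa)$ and $p(fa, a)$ and use \textbf{(sssd)}-style separation — but here we only have \textbf{(sep)}, so I would instead show $p(a, fa) = p(a,a) = p(fa,fa)$ and invoke \textbf{(sep)} directly. We have $p(fa, x_{n+1}) = p(fa, f x_n) \le c\, p(a, x_n) \to c \cdot 0 = 0$. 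Also $0 \le p(fa, fa) \le p(fa, x_{n+1})$ by \textbf{(ssd)}, so $p(fa,fa) = 0$. Finally, by \textbf{(ptri)}, $p(a, fa) \le p(a, x_{n+1}) + p(x_{n+1}, fa) - p(x_{n+1}, x_{n+1}) \le p(a, x_{n+1}) + p(x_{n+1}, fa) \to 0$, and $p(a,fa) \ge 0$ by \textbf{(lbd)}, so $p(a, fa) = 0$. Thus $p(a,a) = p(a,fa) = p(fa,fa) = 0$, and \textbf{(sep)} yields $a = fa$.

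For uniqueness, suppose $fb = b$ as well. Then $p(b,b) = p(fb, fb) \le c\, p(b,b)$ with $0 \le c < 1$ and $p(b,b) \ge 0$ forces $p(b,b) = 0$; similarly $p(a,b) = p(fa, fb) \le c\, p(a,b)$ and $p(a,b) \ge 0$ give $p(a,b) = 0$. Hence $p(a,a) = p(a,b) = p(b,b) = 0$, and \textbf{(sep)} gives $a = b$. The step I expect to require the most care is the Cauchy estimate: one must be scrupulous about the subtracted self-distance terms in the iterated triangle inequality \textbf{(ptri)} and about the role of the lower bound zero, since in O'Neill's setting distances can a priori be negative and the geometric-series argument only works because boundedness below by zero is assumed — this is exactly where Matthews' hypotheses are genuinely used. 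The rest is routine once the sequence is known to converge to a special limit.
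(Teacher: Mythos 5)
Your proof is correct, and it is essentially the classical Matthews argument that the paper's own machinery abstracts: your Cauchy estimate is the $r=0$ case of the telescoped \textbf{(ptri)} plus geometric-series computation in Lemma~\ref{orbcontrCauchy}, and your fixed-point and uniqueness steps match Case~III of Theorem~\ref{cauchyorbcontnexpfpt} and the uniqueness argument in Theorem~\ref{cauchyorbcontractivefpt} specialized to a space bounded below by zero. All the delicate points (nonnegativity of the dropped self-distance terms, $p(a,a)=0$ coming from the special limit, and the three equalities needed to invoke \textbf{(sep)}) are handled correctly.
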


In this paper, we improve on this theorem in two ways. First, we weaken the condition on the function (cf. Theorem~\ref{cauchyorbcontnexpfpt}) and the underlying space (allowing negative distances) so that the existence of a fixed point now holds for a wider class of functions and spaces, albeit we lose uniqueness. Second, we allow a condition on the function general enough to admit fixed points with nonzero self-distances.

\begin{defn}		\label{cauchyatx0}
Given a partial metric space $(X, p)$, an element $x_0\in X$, and a map $f:X\to X$, we say $f$ is {\em Cauchy at $x_0$} if the orbit $\langle f^nx_0\rangle_{n\in\bN}$ of $x_0$ under $f$ is a Cauchy sequence in $(X, p)$. We say $f$ is {\em Cauchy at $x_0$ with special limit $a$} if $f$ is Cauchy at $x_0$ and $a$ is a special limit of the Cauchy sequence $\langle f^nx_0\rangle_{n\in\bN}$. Finally, we say $f$ is {\em Cauchy} if $f$ is Cauchy at every $x\in X$.
\end{defn}
We will show later that if $(X, p)$ is bounded below by zero and $f$ is contractive, then $f$ is Cauchy, but not conversely. Thus, Cauchy maps form a strictly wider class than contractive maps even for partial metric spaces bounded below by zero. We also need the following definition.

\begin{defn}		\label{orbcont}
Given a partial metric space $(X, p)$, elements $x_0, z_0\in X$, and a map $f:X\to X$, we say $f$ is {\em orbitally continuous at $x_0$ for $z_0$} if
$$z_0\mbox{ is a limit of } \langle f^nx_0\rangle_{n\in\bN} \implies fz_0 \mbox{ is a limit of } \langle f^nx_0\rangle_{n\in\bN}$$
i.e.,
$$\lim_{n\to\infty} p(f^nx_0, z_0) = p(z_0, z_0) \implies \lim_{n\to\infty} p(f^nx_0, fz_0) = p(fz_0, fz_0).$$

We say $f$ is {\em orbitally continuous at $x_0$} if $f$ is orbitally continuous at $x_0$ for every $z\in X$. And, we say $f$ is {\em orbitally continuous} if it is orbitally continuous at every $x\in X$.
\end{defn}

The following are the main results of this paper.
\begin{thm}[Cauchy Mapping Theorem for Partial Metric Spaces]		\label{cauchyorbcontnexpfpt}
Let $(X, p)$ be a partial metric space, $x_0\in X$ be an element, and $f:X\to X$ be a map such that $f$ is Cauchy at $x_0$ with special limit $a\in X$. Further assume at least one of the following conditions holds:
\begin{enumerate}
\item $f$ is non-expansive and orbitally continuous at $x_0$ for $a$;
\item $f$ is orbitally continuous at $x_0$ for $a$ and $(X, p)$ is bounded below by $p(fa, fa)$;
\item $f$ is non-expansive and $(X, p)$ is bounded below by $p(a, a)$.
\end{enumerate}
Then $a$ is a fixed point of $f$.
\end{thm}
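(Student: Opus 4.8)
The plan is to reduce everything to the identity $p(a,a)=p(a,fa)=p(fa,fa)$, since then \textbf{(sep)} forces $fa=a$. Throughout, write $x_n:=f^nx_0$. Because $f$ is Cauchy at $x_0$ with special limit $a$, we have
$$\lim_{m,n\to\infty}p(x_m,x_n)=\lim_{n\to\infty}p(a,x_n)=p(a,a),$$
and taking $m=n$ in the double limit also gives $\lim_{n\to\infty}p(x_n,x_n)=p(a,a)$. The single tool I would use repeatedly is the instance of \textbf{(ptri)}
$$p(a,fa)\le p(a,x_n)+p(x_n,fa)-p(x_n,x_n):$$
whenever we manage to show that $\lim_{n\to\infty}p(x_n,fa)$ exists and equals some $\ell$, passing to the limit yields $p(a,fa)\le\ell$, while \textbf{(ssd)} together with \textbf{(sym)} always supplies the reverse-type bounds $p(fa,fa)\le p(fa,a)=p(a,fa)$ and $p(a,a)\le p(a,fa)$.

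I would first dispatch cases (1) and (2), where $f$ is orbitally continuous at $x_0$ for $a$. A special limit is in particular a limit, so $a$ is a limit of $\langle x_n\rangle$; orbital continuity then says $fa$ is a limit of $\langle x_n\rangle$, i.e. $\lim_{n\to\infty}p(x_n,fa)=p(fa,fa)$. Feeding $\ell=p(fa,fa)$ into the inequality above gives $p(a,fa)\le p(fa,fa)$, hence $p(a,fa)=p(fa,fa)$ by \textbf{(ssd)}. To identify $p(a,a)$ with this common value: under (1), non-expansiveness gives $p(x_{n+1},fa)=p(fx_n,fa)\le p(x_n,a)\to p(a,a)$, and since $\langle x_{n+1}\rangle$ is a tail of $\langle x_n\rangle$ its limit against $fa$ is still $p(fa,fa)$, so $p(fa,fa)\le p(a,a)$; combined with $p(a,a)\le p(a,fa)$ the three numbers coincide. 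Under (2), the lower bound $p(x_m,x_n)\ge p(fa,fa)$ passes to the double limit to give $p(a,a)\ge p(fa,fa)$, and once more $p(a,a)\le p(a,fa)=p(fa,fa)$ closes the loop.

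For case (3), $f$ non-expansive and $(X,p)$ bounded below by $p(a,a)$, the point is to produce the limit $\ell$ without orbital continuity. Non-expansiveness gives $p(x_{n+1},fa)\le p(x_n,a)\to p(a,a)$, while the lower bound gives $p(x_{n+1},fa)\ge p(a,a)$; the squeeze yields $\lim_{n\to\infty}p(x_n,fa)=p(a,a)$. Now the displayed inequality with $\ell=p(a,a)$ gives $p(a,fa)\le p(a,a)$, hence $p(a,fa)=p(a,a)$ by \textbf{(ssd)}. Finally, non-expansiveness gives $p(fa,fa)\le p(a,a)$ and the lower bound gives $p(fa,fa)\ge p(a,a)$, so $p(fa,fa)=p(a,a)$ as well. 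In every case we arrive at $p(a,a)=p(a,fa)=p(fa,fa)$, and \textbf{(sep)} gives $fa=a$.

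There is no deep obstacle here; the care needed is purely in the bookkeeping of limits. One must make sure that $\lim_{n\to\infty}p(x_n,fa)$ genuinely exists before invoking \textbf{(ptri)} in the limit — this is precisely what orbital continuity, or the two-sided squeeze from non-expansiveness plus the lower bound, is there to guarantee — and one must not forget that a special limit is a limit, which is what licenses applying orbital continuity at the point $a$.
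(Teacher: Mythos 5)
Your proof is correct and follows essentially the same route as the paper: each case is reduced to the triple equality $p(a,a)=p(a,fa)=p(fa,fa)$, after which \textbf{(sep)} forces $fa=a$. The only organizational difference is that you inline the needed one-sided instances of \textbf{(ptri)} (with \textbf{(ssd)} and \textbf{(sym)} supplying the reverse inequalities), whereas the paper factors the two-sided limit statement $\lim_{n\to\infty}p(f^nx_0,y)=p(a,y)$ out into Lemma~\ref{cauchycont} and its corollary Lemma~\ref{orbcontrcont}.
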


As it turns out, things are much simpler for a strong partial metric space as we need fewer conditions for a fixed point to exist.
\begin{thm}[Cauchy Mapping Theorem for Strong Partial Metric Spaces]		\label{strongcauchyorbcontnexpfpt}
Let $(X, p)$ be a strong partial metric space, $x_0\in X$ be an element, and $f:X\to X$ be a map such that $f$ is Cauchy at $x_0$ with special limit $a\in X$. Further assume at least one of the following conditions holds:
\begin{enumerate}
\item $f$ is non-expansive;
\item $f$ is orbitally continuous at $x_0$ for $a$.
\end{enumerate}
Then $a$ is a fixed point of $f$.
\end{thm}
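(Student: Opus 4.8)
The plan is to reduce both cases to the single inequality
$p(a, fa)\le\max\{p(a,a),\,p(fa,fa)\}$
and then invoke {\bf(sssd)}. Write $x_n := f^nx_0$ for $n\in\bN$. Since $a$ is a special limit of $\langle x_n\rangle_{n\in\bN}$, there is a real $r$ with $r=\lim_{m,n\to\infty}p(x_m,x_n)=\lim_{n\to\infty}p(a,x_n)=p(a,a)$, and restricting the double limit to the diagonal $m=n$ gives $\lim_{n\to\infty}p(x_n,x_n)=r$ as well. Granting the displayed inequality, one finishes as follows: {\bf(ssd)} holds in every strong partial metric space (it is the non-strict form of {\bf(sssd)}), so $\max\{p(a,a),\,p(fa,fa)\}\le p(a,fa)$; hence $p(a,fa)$ equals $p(a,a)$ or $p(fa,fa)$. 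If $fa\ne a$, then {\bf(sssd)} gives the strict inequalities $p(a,a)<p(a,fa)$ and $p(fa,fa)<p(fa,a)=p(a,fa)$, contradicting this. Therefore $fa=a$.

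In case (1), non-expansiveness gives $p(x_{n+1},fa)=p(fx_n,fa)\le p(x_n,a)$, so $\limsup_n p(x_{n+1},fa)\le\lim_n p(x_n,a)=p(a,a)$. Applying {\bf(ptri)} to the triple $a,\,x_{n+1},\,fa$ yields $p(a,fa)\le p(a,x_{n+1})+p(x_{n+1},fa)-p(x_{n+1},x_{n+1})$; passing to the $\limsup$ on the right and using $\lim_n p(a,x_{n+1})=p(a,a)$ together with $\lim_n p(x_{n+1},x_{n+1})=r=p(a,a)$ gives $p(a,fa)\le p(a,a)$, which is (more than) the desired inequality.

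In case (2), a special limit is in particular a limit, so $\lim_n p(x_n,a)=p(a,a)$; orbital continuity of $f$ at $x_0$ for $a$ then makes $fa$ a limit of $\langle x_n\rangle_{n\in\bN}$ as well, i.e.\ $\lim_n p(x_n,fa)=p(fa,fa)$. Applying {\bf(ptri)} to $a,\,x_n,\,fa$ and letting $n\to\infty$ gives $p(a,fa)\le p(a,a)+p(fa,fa)-r=p(fa,fa)$, again the desired inequality.

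I expect the whole argument to be routine. The two places calling for a little care are the passage from the double Cauchy limit to the diagonal limit $\lim_n p(x_n,x_n)=r$, and, in case (1), the fact that $p(x_{n+1},fa)$ is controlled only through a $\limsup$ rather than a genuine limit — which does no harm because that term appears on the larger side of {\bf(ptri)}. One might hope to shorten case (1) by showing $fa$ is a special limit and quoting uniqueness of special limits (Lemma~\ref{spllimitunique}), but that is circular: the identity $p(fa,fa)=r$ needed to recognize $fa$ as a special limit is only available after $fa=a$ has already been established.
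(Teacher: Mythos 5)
Your proof is correct and follows essentially the same route as the paper: in case (1) non-expansiveness yields $p(a,fa)\le p(a,a)$, in case (2) orbital continuity yields $p(a,fa)\le p(fa,fa)$, and \textbf{(sssd)} forces $fa=a$. The only difference is cosmetic: where the paper cites Lemma~\ref{cauchycont} (resp.\ Lemma~\ref{orbcontrcont}) to identify $\lim_n p(f^nx_0,fa)$ with $p(a,fa)$, you re-derive the needed one-sided estimates directly from \textbf{(ptri)} and the diagonal limit $\lim_n p(x_n,x_n)=r$, which is a valid inlining of those lemmas.
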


The paper is organized as follows. In Section 2, we give a motivational example for studying nonzero self-distances as well as condition ({\bf sssd}). In Section 3, we describe the natural topology on a partial metric space, and show that it is $T_0$ in general and $T_1$ for a strong partial metric space. Section 4 gives an equivalent condition for orbitally continuous maps. In Section 5, we prove our two main results, which serve as the central tools in proving other fixed point theorems in the context of (strong) partial metric spaces. In Sections 6 and 7, we give examples of two classes of functions, namely ``orbitally $r$-contractive'' and ``orbitally $\phi_r$-contractive'' functions, that satisfy the property of being Cauchy and thereby provide more examples of fixed point theorems. Finally, in Section 8, we apply our main results to give more streamlined and concise proofs of some other known fixed point theorems from \cite{EKIE}, though in the context of strong partial metric spaces.
\newline\newline
{\bf Acknowledgements.}
The authors would like to thank Franz-Viktor Kuhlmann and Ed Tymchatyn for their careful proof-reading of this manuscript and for their numerous suggestions on corrections and improvements. The first-named author would also like to thank Katarzyna Kuhlmann for introducing him to generalized notions of metric spaces.

\section{An example in biological setting}
DNA, proteins, words are all examples of finite sequences generated from a finite alphabet. And often a generic question is: given two finite sequences $x=\langle x_1, x_2, \ldots, x_n\rangle$ and $y=\langle y_1, y_2, \ldots, y_m\rangle$, how similar are these two sequences?

In the case of DNA, for example, and while studying mutation from $x$ to $y$, it becomes important to come up with a measure that can effectively compare partial DNA strands. One such  measure is the following commonly used {\em scoring scheme} \cite{IEIJWT}: one first aligns two given words so that their lengths match (to do this one uses ``---'' as part of the alphabet), and then compares them letter-by-letter and assigns a score to each of the four distinct possibilities, namely, a score of $\alpha$ if the two letters match, a score of $\beta$ if they don't match, a score of $\gamma$ if only one of the letters is ``---'', and a score of zero if both the letters are ``---''. Then these scores are summed up to assign a total score for that particular alignment of the given pair of words. Finally all possible alignments are considered between the two words and the highest possible score is assigned to the pair, which is then used as a measure of the similarity or dissimilarity of the two words. As an example, if $\alpha = +1$, $\beta = -1$ and $\gamma = -2$, then the total score of the pair (CGATC, CAGA) for the particular alignment (x=CGA---TC, y=C---AGA---) is $+1 - 2 + 1 -2 -1 -2 = -5$. It is not hard to show that the best possible score for the same pair of words is $-2$ arising from the alignment (x=CGATC, y=C---AGA).

Fix $\alpha, \beta, \gamma\in\bR$ and let $s(x, y)$ denote the scoring function as mentioned above. We now show that for certain choices of $\alpha$, $\beta$ and $\gamma$, the scoring function $s(x, y)$ gives rise to a strong partial metric $p(x, y)$. 
\begin{prop}
The function $p(x, y) := -s(x, y)$ (where $s$ is as defined above) is a strong partial metric provided $\alpha > \beta$, $\alpha > \gamma$, $\beta\ge 2\gamma$ and $\gamma < 0$.
\end{prop}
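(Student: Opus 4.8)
The plan is to read off $p$ as a minimum-cost alignment functional. Assign to each column of an alignment of $x$ and $y$ a \emph{cost} $-\alpha$ if it is a match, $-\beta$ if a mismatch, $-\gamma$ if exactly one entry is a gap, and $0$ if both entries are gaps; then $p(x,y)=-s(x,y)$ is exactly the minimum total column-cost over all alignments of $x$ and $y$. A double-gap column costs nothing and may be deleted without altering the cost or the pair of aligned words, so one may restrict attention to alignments with no double-gap column; there are only finitely many of these, so the minimum is attained and $p$ takes values in $\bR$. Condition (sym) is immediate from the symmetry of the scoring rule, so the substance is (sssd)---which, as remarked in the excerpt, implies (sep)---and (ptri).

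First I would pin down self-distances via a counting bound: if $n=\min(|x|,|y|)$ and $m=\max(|x|,|y|)$, then $s(x,y)\le n\alpha+(m-n)\gamma$. Indeed, writing a double-gap-free alignment in terms of its number $a$ of matches, number $b$ of mismatches, and numbers $g_x,g_y$ of one-gap columns carrying a letter on the $x$-side resp.\ the $y$-side, one has $|x|=a+b+g_x$ and $|y|=a+b+g_y$, so its score equals $a\alpha+b\beta+(g_x+g_y)\gamma=a(\alpha-2\gamma)+b(\beta-2\gamma)+(n+m)\gamma$. The hypotheses give $\alpha-2\gamma>\beta-2\gamma\ge0$ (from $\alpha>\beta\ge2\gamma$), and $a+b\le n$, so the score is at most $n(\alpha-2\gamma)+(n+m)\gamma=n\alpha+(m-n)\gamma$, with equality forcing $a=n$, $b=0$. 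Taking $x=y$ gives $s(x,x)=|x|\alpha$, achieved by the identity alignment. For $x\ne y$: if $|x|\ne|y|$ then $s(x,y)\le n\alpha+(m-n)\gamma<n\alpha=s(x,x)$ because $\gamma<0$, and the same quantity is $<m\alpha=s(y,y)$ because $\gamma<\alpha$; if $|x|=|y|=n$ then the bound reads $s(x,y)\le n\alpha$, and equality would force $a=n$, $b=g_x=g_y=0$, i.e.\ the position-by-position all-match alignment, whence $x=y$, a contradiction, so $s(x,y)<n\alpha$. In every case $s(x,y)<\min\{s(x,x),s(y,y)\}$, which is precisely (sssd).

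The heart of the matter is (ptri), i.e.\ $s(x,z)+s(z,y)\le s(x,y)+s(z,z)=s(x,y)+|z|\alpha$. I would fix optimal alignments $A$ of $(x,z)$ and $B$ of $(z,y)$ and \emph{stitch} them along their shared word $z$, processing $z$ one letter at a time: before the column carrying $z_j$, first emit as $(x_i,\text{gap},\text{gap})$ the columns of $A$ preceding that letter in which $z$ has a gap, then emit as $(\text{gap},\text{gap},y_k)$ the analogous columns of $B$, then emit the column carrying $z_j$, which fuses the $A$-column $(x_i\ \text{or gap},\,z_j)$ and the $B$-column $(z_j,\,y_k\ \text{or gap})$ into $(x_i\ \text{or gap},\,z_j,\,y_k\ \text{or gap})$; finish with the trailing gap columns of $A$ and of $B$. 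Deleting the $z$-row of this three-row alignment of $x,z,y$ (and any column it leaves all-gap, which is cost-free) yields an alignment $C$ of $(x,y)$, so $s(x,y)\ge\sigma(C)$, where $\sigma$ denotes total score. Letting $c_{xz},c_{zy},c_{xy}$ be the contributions of a given column to the three pairwise scores, it then suffices to check column by column that $c_{xz}+c_{zy}-c_{xy}\le\alpha$ when the $z$-entry of that column is a letter and $\le0$ when it is a gap; summing over all columns, and noting the $z$-row has exactly $|z|$ letters, gives $\sigma(A)+\sigma(B)-\sigma(C)\le|z|\alpha$, hence $s(x,z)+s(z,y)=\sigma(A)+\sigma(B)\le|z|\alpha+\sigma(C)\le|z|\alpha+s(x,y)$, as wanted. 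By construction every column carries a letter in at least one row, and a letter in the $z$-row exactly on the columns carrying some $z_j$, so the column-patterns that actually occur are few; a direct inspection disposes of all of them, the hypotheses entering only through $\beta<\alpha$ (for a column with all three rows lettered and the two outer pairs mismatched, where the difference is $2\beta-\alpha$ or $\beta$) and through $2\gamma<\alpha$, i.e.\ $\alpha>\beta\ge2\gamma$ (for a column with $z$ lettered but both outer rows gapped, where the difference is $2\gamma$).

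I expect the main obstacle to be not any single inequality but making the stitching construction and its column bookkeeping airtight---in particular, describing the interleaving of the gap columns of $A$ and $B$ precisely enough that one can enumerate the $(x,z,y)$-patterns that occur, confirm the all-gap pattern is excluded, and see that $C$ really is an alignment of $x$ and $y$; granting that, each of the finitely many case checks is a one-line consequence of $\alpha>\beta$, $\alpha>\gamma$, $\beta\ge2\gamma$, and $\gamma<0$.
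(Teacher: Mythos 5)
Your proof is correct and follows essentially the same route as the paper's: both reduce \textbf{(ptri)} to a per-column inequality over a three-row alignment obtained by composing optimal alignments of $(x,z)$ and $(z,y)$ along $z$, and verify it by a finite case check, with \textbf{(sssd)} handled separately. You are in fact more careful than the paper at the two points it glosses over --- the existence of the stitched three-row alignment (which the paper dispatches with a ``without loss of generality'') and the quantitative bound $s(x,y)\le n\alpha+(m-n)\gamma$ with its equality analysis underlying \textbf{(sssd)}.
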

\begin{proof}
It is easy to see that $s(x, y)$ (and hence $p(x, y)$) is symmetric for all nonempty words $x$ and $y$. Also, notice that for any given word $x$, the value $s(x, x)$ is the highest possible value of $s(x, y)$ for any word $y$ as $\alpha > \beta$, $\alpha > \gamma$ and $\gamma < 0$. In particular, for any given words $x$ and $y$ with $x\not= y$, we have $s(x, x) > s(x, y)$ (and hence $p(x, x) < p(x, y)$), since any insertion or deletion or mismatch with $x$ will irreparably reduce the value of $s(x, y)$ from the highest possible value of $s(x, x)$. 

So it suffices to demonstrate condition {\bf(ptri)}. To that end, let $x=\langle x_1, \ldots, x_n\rangle$, $y=\langle y_1, \ldots, y_n\rangle$ and $z=\langle z_1, \ldots, z_n\rangle$ be three word sequences of the same length such that $x$ and $z$ are optimally aligned. We can also assume without loss of generality that $y$ and $z$ are optimally aligned by aligning ``---'' with ``---'' if necessary. Since $x$ and $y$ need not be optimally aligned, we denote the score for this alignment of $x$ and $y$ by $h$. In particular, $h(x, y)\le s(x, y)$. We now show that 
\begin{eqnarray}	\label{eqn1}
h(x, y)\ge s(x, z) + s(z, y) - s(z, z).
\end{eqnarray} 
We have to consider several cases. Fix $1\le i\le n$. Then one of the following holds:
\begin{enumerate}
\item[i.] $x_i = y_i$, $z_i\not= $ ---
\item[ii.] $x_i = y_i$, $z_i = $ ---
\item[iii.] $x_i\not= y_i$, $z_i\not= $ ---
\item[iv.] $x_i\not= y_i$, $z_i = $ ---
\item[v.] $x_i = $ ---, $y_i = z_i$
\item[vi.] $x_i = $ ---, $y_i\not= z_i$
\item[vii.] $x_i = z_i = $ ---, $y_i\not= $ ---
\item[viii.] $x_i = y_i = $ ---, $z_i\not= $ ---
\item[ix.] $x_i = y_i = z_i = $ ---
\end{enumerate}
In the above list, when we say $a = b$ or $a\not= b$, we mean both $a$ and $b$ are letters other than ``---''. We leave it to the reader to verify that (\ref{eqn1}) holds in all these 9 cases. As examples, we work out three of these cases here, namely (i), (iv) and (viii):
\begin{itemize}
\item[i.] In this situation, we have $h(x_i, y_i) = \alpha$ and $s(z_i, z_i) = \alpha$. Also, $s(x_i, z_i) = s(z_i, y_i) = \alpha$ (if $z_i = x_i = y_i$) or $s(x_i, z_i) = s(z_i, y_i) = \beta$ (if $z_i\not= x_i = y_i$). In particular, $s(x_i, z_i) = s(z_i, y_i)\le\alpha$. Since $\alpha\ge \beta$, (\ref{eqn1}) follows.
\item[iv.] In this situation, we have $h(x_i, y_i) = \beta$, $s(x_i, z_i) = \gamma$, $s(y_i, z_i) = \gamma$ and $s(z_i, z_i) = 0$. Since $\beta\ge 2\gamma$, (\ref{eqn1}) follows.
\item[viii.] In this situation, we have $h(x_i, y_i) = 0$, $s(x_i, z_i) = \gamma$, $s(y_i, z_i) = \gamma$ and $s(z_i, z_i) = \alpha$. Since $\alpha\ge 2\gamma$, (\ref{eqn1}) follows.
\end{itemize}
Since (\ref{eqn1}) holds for the triplet $(x_i, y_i, z_i)$ for all $1\le i\le n$, it follows that
\begin{eqnarray*}
s(x, y) \;\ge\; h(x, y) = \sum_{i = 1}^n h(x_i, y_i) & \ge & \sum_{i = 1}^n \Big(s(x_i, z_i) + s(z_i, y_i) - s(z_i, z_i)\Big) \\
& = & \sum_{i = 1}^n s(x_i, z_i) + \sum_{i = 1}^n s(z_i, y_i) - \sum_{i = 1}^n s(z_i, z_i) \\
& = & s(x, z) + s(z, y) - s(z, z).
\end{eqnarray*}
Hence, multiplying both sides by $-1$, we obtain that $p(x, y)\le p(x, z) + p(z, y) - p(z, z)$ for all words $x$, $y$ and $z$. Thus, $p$ is indeed a strong partial metric.
\end{proof}

\section{Topology}
Let $(X, p)$ be a partial metric space. Following \cite{SJON}, we define an {\bf open ball} as:
$$B_\epsilon(x) := \{y\mid p(x, y) - p(x, x) < \epsilon\}$$
for $x\in X$ and $\epsilon\in\bR^{> 0}$. It is easy to see that these balls are nonempty for $\epsilon > 0$, and empty for $\epsilon \le 0$. It is also easy to see that these balls form a basis for a $T_0$ topology on $X$, called the {\em pmetric topology}. We denote this topology by $\tau[p]$. Finally, since the set of positive rational numbers $\bQ^{> 0}$ is dense in $\bR^{> 0}$, it follows that every point $x\in X$ has a countable local base given by $\{B_q(x)\mid q\in\bQ^{> 0}\}$. Hence, $(X, p)$ is first countable as well.

It is noteworthy that in Definition~\ref{cauchylimit}, the definition of a limit agrees with the topological definition of a limit with respect to the topology $\tau[p]$, whereas the definition of completeness (and of special limits of Cauchy sequences) is the usual definition of completeness in the corresponding symmetrization metric topology $\tau[p^*]$ (cf.~\cite{MBRKSMHP, SJON}), i.e., $(X, p)$ is complete if and only if it is complete with respect to the metric topology $\tau[p^*]$.

We now prove the following improvement for a strong partial metric space.
\begin{thm}
For a strong partial metric space $(X, p)$, the topology $\tau[p]$ is $T_1$.
\end{thm}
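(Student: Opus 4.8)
The plan is to prove the equivalent statement that every singleton $\{x\}$ is closed in $\tau[p]$, which amounts to showing that for each pair of distinct points $x,y\in X$ there is a basic open set containing $y$ but not $x$. Since the basis consists of the open balls $B_\epsilon(x)=\{z\mid p(x,z)-p(x,x)<\epsilon\}$, this is a direct computation once we feed in the new axiom {\bf(sssd)}, which is exactly the hypothesis that separates strong partial metric spaces from arbitrary ones.

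First I would fix $x\ne y$ in $X$. By {\bf(sssd)} we have $p(y,y)<p(y,x)$, so the real number $\epsilon:=p(y,x)-p(y,y)$ is strictly positive; hence $B_\epsilon(y)$ is a legitimate (nonempty) basic open set. Next I would verify the two containments: $y\in B_\epsilon(y)$ because $p(y,y)-p(y,y)=0<\epsilon$, while $x\notin B_\epsilon(y)$ because $p(y,x)-p(y,y)=\epsilon$, which is not strictly less than $\epsilon$. Thus $B_\epsilon(y)$ is an open neighbourhood of $y$ that avoids $x$.

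Since such a neighbourhood exists for every $y\ne x$, the set $X\setminus\{x\}$ is open, i.e.\ $\{x\}$ is closed, and as $x$ was arbitrary, $\tau[p]$ is $T_1$. I do not anticipate any genuine obstacle: the entire argument is one application of {\bf(sssd)} together with the definition of the open balls. The content of the statement is conceptual rather than technical — it records that replacing {\bf(ssd)} by the strict inequality {\bf(sssd)} upgrades the generic $T_0$ separation of $\tau[p]$ (which can genuinely fail to be $T_1$ when $p(x,x)=p(x,y)$ is permitted for some $x\ne y$, since then every ball around $x$ also contains $y$) to the $T_1$ axiom. If one wishes, the symmetry condition {\bf(sym)} can be used to rephrase $p(y,x)$ as $p(x,y)$, but it is not needed for the separation itself.
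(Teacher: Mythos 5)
Your proof is correct and is essentially the paper's argument: both rest on the single observation that {\bf(sssd)} makes $\epsilon=p(y,x)-p(y,y)$ strictly positive, so that $B_\epsilon(y)$ contains $y$ but not $x$. The only cosmetic difference is that you package this as ``singletons are closed'' while the paper exhibits the two separating balls directly.
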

\begin{proof}
Let $x, y\in X$, with $x\not= y$. By condition ({\bf sssd}), we have $p(x, x) < p(x, y)$. Consider the ball
\begin{eqnarray*}
B_{p(x, y) - p(x, x)}(x) & := & \{z\in X\mid p(x, z) - p(x, x) < p(x, y) - p(x, x)\} \\
& = & \{z\in X\mid p(x, z) < p(x, y)\}.
\end{eqnarray*}
Since $p(x, x) < p(x, y)$, we have $x\in B_{p(x, y) - p(x, x)}(x)$. But clearly $y\not\in B_{p(x, y) - p(x, x)}(x)$. Now, consider the ball $B_{p(y, x) - p(y, y)}(y)$. Similarly as above, one can show that $y\in B_{p(y, x) - p(y, y)}(y)$, but $x\not\in B_{p(y, x) - p(y, y)}(y)$. Hence, the topology $\tau[p]$ is $T_1$.
\end{proof}

We now give examples of a partial metric space that is not $T_1$ and a strong partial metric space that is not $T_2$.
\begin{example}\label{counterexample1}
Let $X := \bR\cupdot\{a\}$. Define a function $p:X\times X\to\bR$ as follows:
\begin{eqnarray*}
p(a, a) & = & 0\\
p(a, x) = p(x, a) & = & |x| \;\;\;\;\;\;\;\;\;\;\;\;\;\;\;\;\;\,\mbox{for all } x\in\bR\\
p(x, y) & = & |x - y| - 1 \;\;\;\;\;\mbox{for all } x, y\in\bR.
\end{eqnarray*}
We leave it to the reader to verify that $(X, p)$ is a partial metric space. Note, however, that $(X, p)$ is not a strong partial metric space, since $p(a, a) = 0 = p(a, 0)$, but $a\not= 0$. Also, for the same reason, $0\in B_\epsilon(a)$ for any $\epsilon\in\bR^{>0}$. Consequently, $(X, p)$ is not $T_1$.
\end{example}
\begin{example}\label{counterexample2}
Let $X := \bR^{> 0}$. Define a function $s:X\times X\to\bR$ as follows:
\begin{eqnarray*}
s(x, x) & = & x \;\;\;\;\;\;\;\;\;\;\;\mbox{for all } x\in X\\
s(x, y) & = & x + y \;\;\;\;\;\mbox{for all } x, y\in X \mbox{ with } x\not=y.
\end{eqnarray*}
Again we leave it to the reader to verify that $(X, s)$ is a strong partial metric space, but not a metric space. We now prove that $(X, s)$ is not $T_2$ via the following two claims.
\begin{claim}
For all $x\in X$ and $\epsilon\in\bR^{> 0}$, there exists $z\in X$ such that $z\not=x$ and $z\in B_\epsilon(x)$.
\end{claim}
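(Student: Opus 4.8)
The plan is to unwind the definition of the basic open ball and read off an explicit description of $B_\epsilon(x)$ for the strong partial metric $s$. First I would compute, for an arbitrary $z\in X$ with $z\neq x$, the quantity controlling membership in the ball: by definition of $s$ we have $s(x,z)=x+z$ and $s(x,x)=x$, so $s(x,z)-s(x,x)=z$. Hence such a $z$ lies in $B_\epsilon(x)=\{z\in X\mid s(x,z)-s(x,x)<\epsilon\}$ precisely when $z<\epsilon$ (note $z>0$ is automatic since $X=\bR^{>0}$). Together with the trivial fact that $x\in B_\epsilon(x)$, this gives the closed-form description $B_\epsilon(x)=(0,\epsilon)\cup\{x\}$, viewed as a subset of $\bR^{>0}$.

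With this description in hand the claim is essentially immediate: the open interval $(0,\epsilon)$ is an infinite subset of $X$, so it certainly contains an element $z\neq x$. To be concrete, one may take $z=x/2$ in the case $x<\epsilon$ and $z=\epsilon/2$ in the case $x\geq\epsilon$; in either case $z\in\bR^{>0}$, $z<\epsilon$, and $z\neq x$, so $z\in B_\epsilon(x)$ and $z\neq x$, as required.

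There is no real obstacle here — the only point requiring a moment's care is the case split: when $x\geq\epsilon$ the point $x$ itself does not lie in $(0,\epsilon)$, so one cannot simply ``shrink towards $x$'' and must instead produce a witness strictly below $\epsilon$. I would also note in passing that the formula $B_\epsilon(x)=(0,\epsilon)\cup\{x\}$ is exactly what drives the non-Hausdorffness of $(X,s)$ claimed immediately afterwards: any two balls around sufficiently small points overlap in an initial interval, so distinct small points cannot be separated by disjoint open sets.
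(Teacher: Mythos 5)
Your proof is correct and follows essentially the same route as the paper: both arguments reduce membership of $z\neq x$ in $B_\epsilon(x)$ to the computation $s(x,z)-s(x,x)=z$ and then pick any $z$ with $0<z<\epsilon$ and $z\neq x$. Your explicit case split ($z=x/2$ versus $z=\epsilon/2$) and the closed-form description $B_\epsilon(x)=(0,\epsilon)\cup\{x\}$ are harmless elaborations of the paper's one-line choice of $\delta$.
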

\begin{proof}
Choose $z := \delta$ such that $0 < \delta < \epsilon$ and $\delta\not=x$. Then we have
$$s(x, z) - s(x, x) = x + z - x = z = \delta < \epsilon,$$
and hence $z\in B_\epsilon(x)$.
\end{proof}
\begin{claim}
Let $x, y, z\in X$ with $x\not=z\not=y$. Then for all $\epsilon \ge \delta > 0$, we have
$$z\in B_\delta(y) \implies z\in B_\epsilon(x).$$
\end{claim}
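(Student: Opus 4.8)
The plan is to simply unwind the definition of the open balls and exploit the very special form of $s$. The key observation is that for this partial metric the ``radius condition'' at a point $w$ with $w\neq z$ collapses to a bound on $z$ alone, with no dependence on the center $w$.

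First I would record that, since $z\neq y$, we have $s(y,z)=y+z$, so
$$s(y,z)-s(y,y)=(y+z)-y=z.$$
Hence the hypothesis $z\in B_\delta(y)$ is, by definition of $B_\delta(y)$, exactly the inequality $z<\delta$. Next, since $z\neq x$, the identical computation with center $x$ gives $s(x,z)-s(x,x)=(x+z)-x=z$, so that $z\in B_\epsilon(x)$ is equivalent to $z<\epsilon$. Finally I would chain the two facts through the assumption $\delta\le\epsilon$: from $z<\delta$ and $\delta\le\epsilon$ we get $z<\epsilon$, which is precisely $z\in B_\epsilon(x)$. This completes the argument.

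There is no real obstacle here; the claim is a direct computation. The only point worth flagging is why the hypotheses $x\neq z$ and $z\neq y$ are needed: they are exactly what lets us use the ``off-diagonal'' formula $s(u,z)=u+z$ rather than $s(z,z)=z$, and it is precisely this that makes the containment hold regardless of how $x$ and $y$ compare. Together with the preceding claim, this shows that every basic open neighbourhood of a point contains the whole interval $(0,\text{radius})$ (up to possibly omitting the center), so any two basic neighbourhoods of distinct points of $X$ intersect; hence $(X,s)$ is not $T_2$, which is the use to which the claim will be put.
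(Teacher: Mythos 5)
Your proof is correct and is essentially identical to the paper's: both unwind the definition of the balls, use the off-diagonal formula $s(w,z)=w+z$ to reduce membership in $B_\delta(y)$ and $B_\epsilon(x)$ to the single inequality on $z$, and chain through $\delta\le\epsilon$. Nothing further is needed.
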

\begin{proof}
\begin{eqnarray*}
z\in B_\delta(y) & \implies & s(y, z) - s(y, y) < \delta \;\implies\; y + z - y < \delta \;\implies\; z < \delta \implies z < \epsilon\\
& \implies & x + z - x < \epsilon \;\implies\; s(x, z) - s(x, x) < \epsilon \;\implies\; z\in B_\epsilon(x).
\end{eqnarray*}
\end{proof}
As an immediate consequence of these two claims, it follows that for any $x, y\in X$ with $x\not=y$ and $\epsilon, \delta\in\bR^{> 0}$, we have that $B_\epsilon(x)\cap B_\delta(y)\not=\emptyset$. Hence, $(X, s)$ is not $T_2$.
\end{example}

\begin{rmk}\label{counterexample3}
As promised in the Introduction, we have given examples that show that
$$\{\mbox{Metric Spaces}\} \subsetneq \{\mbox{Strong Partial Metric Spaces}\} \subsetneq \{\mbox{Partial Metric Spaces}\}.$$
Examples~\ref{counterexample2} and \ref{counterexample1} instantiate the first and the second proper inclusion, respectively.
\end{rmk}

We end this section by giving an example of a Cauchy sequence $\langle x_n\rangle_{n\in\bN}$ in a partial metric space $(X, p)$ such that it has more than one limit in $X$, one of them being special.
\begin{example}\label{couterexample4}
Let $X := \bR\cupdot\{a\}$ and $p:X\times X\to\bR$ be as defined in Example~\ref{counterexample1}.

Set $x_n := \frac{1}{2^n}$ for $n\in\bN$. Observe that for $m > n$, we have
$$-1 \le p(x_m, x_n) = \Big|\frac{1}{2^m} - \frac{1}{2^n}\Big| - 1 = \frac{1}{2^n}\Big|\frac{1}{2^{m-n}} - 1\Big| - 1 < \frac{1}{2^n} - 1.$$
And hence, $\lim_{m, n\to\infty} p(x_m, x_n) = -1$. Thus, $\langle x_n\rangle_{n\in\bN}$ is a Cauchy sequence in $(X, p)$.

Now observe that $p(0, x_n) = \frac{1}{2^n} - 1$ for all $n\in\bN$, and hence
$$\lim_{n\to\infty} p(0, x_n) = \lim_{n\to\infty} \frac{1}{2^n} - 1 = -1 = p(0, 0) = \lim_{m, n\to\infty} p(x_m, x_n).$$
Thus, $0$ is a special limit of the sequence $\langle x_n\rangle_{n\in\bN}$.

Also observe that $p(a, x_n) = \frac{1}{2^n}$ for all $n\in\bN$, and hence
$$\lim_{n\to\infty} p(a, x_n) = \lim_{n\to\infty} \frac{1}{2^n} = 0 = p(a, a).$$
Thus, $a$ is a limit of the sequence $\langle x_n\rangle_{n\in\bN}$. Since $a\not=0$, we thus obtain non-unique limits.
\end{example}

\section{Orbital Continuity}
Recall the definition of orbital continuity, cf. Definition~\ref{orbcont}. In this section, we give an equivalent criterion for checking if the property holds in a certain special situation. We start by proving the following lemma.

\begin{lem}		\label{cauchycont}
For each partial metric space $(X, p)$, and each Cauchy sequence $\langle x_n\rangle_{n\in\bN}$ in $(X, p)$ with a special limit $a\in X$, the following holds: for every $y\in X$,
$$\lim_{n\to\infty} p(x_n, y) = p(a, y).$$
\end{lem}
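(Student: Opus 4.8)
The plan is to bound $p(x_n,y)$ from above and from below by applying the partial metric triangle inequality \textbf{(ptri)} with the special limit $a$ as the intermediate point, and then to squeeze.

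First I would record a preliminary observation. Write $r := \lim_{m,n\to\infty} p(x_m,x_n)$, which equals $p(a,a)$ since $a$ is a special limit. Specializing the double-limit condition to the diagonal $m = n$ gives $\lim_{n\to\infty} p(x_n,x_n) = r = p(a,a)$. From the special-limit hypothesis we also have $\lim_{n\to\infty} p(a,x_n) = p(a,a)$, and by \textbf{(sym)} the same holds for $p(x_n,a)$.

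Next, for the upper bound, apply \textbf{(ptri)} in the form $p(x_n,y) \le p(x_n,a) + p(a,y) - p(a,a)$; letting $n\to\infty$ and using the limits just recorded yields $\limsup_{n\to\infty} p(x_n,y) \le p(a,y)$. For the lower bound, apply \textbf{(ptri)} the other way, $p(a,y) \le p(a,x_n) + p(x_n,y) - p(x_n,x_n)$, i.e. $p(x_n,y) \ge p(a,y) - p(a,x_n) + p(x_n,x_n)$; letting $n\to\infty$ gives $\liminf_{n\to\infty} p(x_n,y) \ge p(a,y)$. Combining the two bounds, $\lim_{n\to\infty} p(x_n,y) = p(a,y)$, as desired.

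There is no serious obstacle here; the only point that needs a moment's care is the preliminary observation that $p(x_n,x_n)\to p(a,a)$, since it is exactly what tames the $-p(x_n,x_n)$ correction term appearing in \textbf{(ptri)}. It is simply the diagonal case of the Cauchy condition (alternatively one could obtain it from \textbf{(ssd)} together with a squeeze, but the diagonal argument is cleaner). Everything else is routine manipulation of limits of real sequences.
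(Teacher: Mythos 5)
Your proof is correct and follows essentially the same route as the paper's: both apply \textbf{(ptri)} through $a$ in each direction and control the correction terms $p(x_n,a)-p(a,a)$ and $p(x_n,x_n)-p(a,a)$ using the special-limit and Cauchy (diagonal) conditions. The only cosmetic difference is that you phrase the squeeze via $\limsup$/$\liminf$ while the paper runs an explicit $\epsilon$--$N$ argument.
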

\begin{proof}
Since the sequence $\langle x_n\rangle_{n\in\bN}$ is Cauchy, and $a$ is a special limit of $\langle x_n\rangle_{n\in\bN}$, we have by Definition~\ref{cauchylimit}, 
$$\lim_{m, n\to\infty}p(x_m, x_n) = \lim_{n\to\infty}p(a, x_n) = p(a, a).$$
Fix $\epsilon > 0$ and $y\in X$. For any $n\in\bN$, we have
$$p(x_n, y) \le p(x_n, a) + p(a, y) - p(a, a).$$
Choose $N_1$ large enough such that $p(x_n, a) - p(a, a) < \epsilon$ for all $n\ge N_1$. Then, for all $n\ge N_1$, we have
$$p(x_n, y) < p(a, y) + \epsilon.$$ 
Conversely, for any $n\in\bN$, we also have
$$p(a, y)\le p(a, x_n) + p(x_n, y) - p(x_n, x_n).$$
Set $\epsilon' := \frac{\epsilon}{2}$, and choose $N_2$ large enough such that $p(a, x_n) < p(a, a) + \epsilon'$ and $p(x_n, x_n) > p(a, a) - \epsilon'$ for all $n\ge N_2$. Then, for all $n\ge N_2$, we have
$$p(a, y) < (p(a, a) + \epsilon') + p(x_n, y) - (p(a, a) - \epsilon') = p(x_n, y) + 2\epsilon' = p(x_n, y) + \epsilon.$$
Setting $N := \max\{N_1, N_2\}$, we obtain for all $n\ge N$
$$p(a, y) - \epsilon < p(x_n, y) < p(a, y) + \epsilon.$$
Since $0 < \epsilon$ is arbitrary, it follows that $\lim_{n\to\infty} p(x_n, y) = p(a, y)$.
\end{proof}

As an immediate corollary, we get our promised criterion.
\begin{lem}		\label{orbcontrcont}
For each partial metric space $(X, p)$, element $x_0\in X$, and map $f:X\to X$ such that $f$ is Cauchy at $x_0$ with special limit $a\in X$, the following holds:
$$\lim_{n\to\infty} p(f^{n}x_0, fa) = p(a, fa).$$
Moreover, under the same hypothesis,
$$f \mbox{ is orbitally continuous at $x_0$ for $a$} \iff p(fa, fa) = p(a, fa).$$
\end{lem}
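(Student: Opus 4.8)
The plan is to derive everything from Lemma~\ref{cauchycont}, together with the elementary observation that when $a$ is a \emph{special} limit of the orbit $\langle f^{n}x_0\rangle_{n\in\bN}$, the hypothesis appearing in the definition of ``orbitally continuous at $x_0$ for $a$'' is automatically satisfied, so orbital continuity collapses to a single numerical equality.

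First I would prove the displayed identity $\lim_{n\to\infty} p(f^{n}x_0, fa) = p(a, fa)$. Since $f$ is Cauchy at $x_0$ with special limit $a$, the sequence $\langle x_n\rangle_{n\in\bN}$ defined by $x_n := f^{n}x_0$ is, by Definition~\ref{cauchyatx0}, a Cauchy sequence in $(X, p)$ having $a$ as a special limit. Applying Lemma~\ref{cauchycont} to this sequence with $y := fa$ yields exactly $\lim_{n\to\infty} p(x_n, fa) = p(a, fa)$, which is the claim.

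Next, for the equivalence, I would unwind Definition~\ref{orbcont} with $z_0 := a$. By that definition, $f$ is orbitally continuous at $x_0$ for $a$ precisely when
$$\lim_{n\to\infty} p(f^{n}x_0, a) = p(a, a) \;\Longrightarrow\; \lim_{n\to\infty} p(f^{n}x_0, fa) = p(fa, fa).$$
However, by Definition~\ref{cauchylimit} the hypothesis that $a$ is a special limit of $\langle f^{n}x_0\rangle_{n\in\bN}$ already includes $\lim_{n\to\infty} p(f^{n}x_0, a) = p(a, a)$; hence the antecedent of this implication holds unconditionally in our setting, and therefore $f$ is orbitally continuous at $x_0$ for $a$ if and only if $\lim_{n\to\infty} p(f^{n}x_0, fa) = p(fa, fa)$. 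Combining this with the identity $\lim_{n\to\infty} p(f^{n}x_0, fa) = p(a, fa)$ from the previous step, the right-hand side rewrites as $p(a, fa) = p(fa, fa)$, giving the desired equivalence.

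I do not expect a genuine obstacle: the whole argument is a two-step consequence of Lemma~\ref{cauchycont}. The only point worth flagging explicitly is the (trivial but essential) remark that a special limit is in particular a limit, so that in this situation the ``$\implies$'' in the definition of orbital continuity has an automatically true hypothesis; this is exactly what reduces ``orbitally continuous at $x_0$ for $a$'' to the equality $p(fa, fa) = p(a, fa)$.
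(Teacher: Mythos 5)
Your proof is correct and follows essentially the same route as the paper: apply Lemma~\ref{cauchycont} with $x_n := f^n x_0$ and $y := fa$ for the first statement, then reduce orbital continuity at $x_0$ for $a$ to the single equality $p(fa,fa)=p(a,fa)$. Your explicit remark that the antecedent in Definition~\ref{orbcont} holds automatically because a special limit is in particular a limit is a point the paper leaves implicit, and it is indeed what makes the equivalence (rather than a one-way implication) go through.
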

\begin{proof}
Setting $x_n := f^{n}x_0$ ($n\in\bN$) and $y := fa$ in Lemma~\ref{cauchycont}, we get our first statement.

Now under the same hypothesis,
\begin{eqnarray*}
& & f \mbox{ is orbitally continuous at $x_0$ for $a$} \\
& \iff & \lim_{n\to\infty} p(f^{n}x_0, fa) = p(fa, fa) \\
& \iff & p(a, fa) = p(fa, fa).
\end{eqnarray*}
The last line of the above if-and-only-if sequence follows from the first statement.
\end{proof}

We also get the uniqueness of special limits as a corollary.
\begin{lem}		\label{spllimitunique}
For each partial metric space $(X, p)$ and each Cauchy sequence $\langle x_n\rangle_{n\in\bN}$ in $(X, p)$, there is at most one special limit of $\langle x_n\rangle_{n\in\bN}$ in $X$.
\end{lem}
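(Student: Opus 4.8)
The plan is to argue by contradiction-free direct comparison: suppose $a$ and $b$ are both special limits of the Cauchy sequence $\langle x_n\rangle_{n\in\bN}$ and show $a = b$ using the separation condition {\bf(sep)}. First I would unpack the definition of special limit for each: writing $r := \lim_{m,n\to\infty} p(x_m, x_n)$ (which exists since the sequence is Cauchy), the hypothesis that $a$ is special gives $\lim_{n\to\infty} p(a, x_n) = p(a,a) = r$, and likewise $\lim_{n\to\infty} p(b, x_n) = p(b,b) = r$. So already $p(a,a) = p(b,b) = r$, and it remains only to pin down $p(a,b)$.

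The key step is to invoke Lemma~\ref{cauchycont}, which is exactly what makes this corollary immediate. Applying that lemma to the Cauchy sequence $\langle x_n\rangle$ with special limit $a$ and with the fixed point $y := b$ yields $\lim_{n\to\infty} p(x_n, b) = p(a, b)$. On the other hand, by {\bf(sym)} we have $p(x_n, b) = p(b, x_n)$ for every $n$, and we already observed $\lim_{n\to\infty} p(b, x_n) = p(b,b) = r$. Comparing the two computations of $\lim_{n\to\infty} p(x_n,b)$ forces $p(a,b) = r$.

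Putting the pieces together, $p(a,a) = p(a,b) = p(b,b) = r$, so condition {\bf(sep)} gives $a = b$, proving that the special limit, when it exists, is unique. I do not expect any genuine obstacle here: the real content has already been absorbed into Lemma~\ref{cauchycont}, and the only things to be careful about are (i) noting that $r$ is well-defined before using it, and (ii) remembering to apply symmetry so that $p(x_n,b)$ and $p(b,x_n)$ can be identified. One could alternatively apply Lemma~\ref{cauchycont} a second time with special limit $b$ and $y := a$ to get $p(b,a) = p(a,a)$ directly, but that is not needed once {\bf(sep)} is in hand.
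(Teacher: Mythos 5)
Your proof is correct and follows essentially the same route as the paper: both deduce $p(a,a)=p(b,b)=r$ from the definition of special limit, then apply Lemma~\ref{cauchycont} (with one special limit playing the role of $a$ and the other as the fixed point $y$) to get $p(a,b)=r$, and conclude by {\bf(sep)}. The only cosmetic difference is which of the two limits you feed into the lemma, which is immaterial by symmetry.
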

\begin{proof}
Let $a$ and $b$ be two special limits of $\langle x_n\rangle_{n\in\bN}$ in $X$. Then, by Definition~\ref{cauchylimit}, we have
\begin{eqnarray*}
& & \lim_{m, n\to\infty} p(x_m, x_n) = \lim_{n\to\infty} p(a, x_n) = p(a, a) \\
& & \lim_{m, n\to\infty} p(x_m, x_n) = \lim_{n\to\infty} p(b, x_n) = p(b, b).
\end{eqnarray*}

From this we obtain that $p(a, a) = p(b, b)$. By Lemma~\ref{cauchycont}, we also have
\begin{eqnarray*}
\lim_{n\to\infty} p(x_n, a) & = &  p(b, a)
\end{eqnarray*}

Combining all of these, we get that $p(a, a) = p(a, b) = p(b, b)$. By condition {\bf(sep)}, it then follows that $a = b$.
\end{proof}

We end this section with the following obvious result. Since a continuous function $f:X\to X$ on a metric space $(X, d)$ is sequentially continuous, we obtain the following.
\begin{lem}		\label{contimpliesorbcontmetric}
A continuous map $f: X\to X$ on a metric space $(X, d)$ is $\mbox{orbitally continuous.}$
\end{lem}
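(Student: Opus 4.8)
The plan is to unwind the definition of orbital continuity in the metric setting and reduce it to ordinary sequential continuity, with a one-step index shift as the only point requiring a word of care. Recall that a metric space $(X,d)$ is the special case of a (strong) partial metric space with $p = d$ and $p(x,x)=0$ for all $x\in X$; under this identification, ``$z_0$ is a limit of $\langle f^nx_0\rangle_{n\in\bN}$'' in the sense of Definition~\ref{cauchylimit} reads $\lim_{n\to\infty} d(f^nx_0, z_0) = d(z_0,z_0) = 0$, i.e. $f^nx_0 \to z_0$ in the usual metric sense. So, fixing $x_0, z_0 \in X$, what we must show is: if $f^nx_0 \to z_0$, then $f^nx_0 \to fz_0$.

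First I would assume $f^nx_0 \to z_0$. Since $f$ is continuous on the metric space $(X,d)$, it is sequentially continuous, so applying $f$ to the convergent sequence $\langle f^nx_0\rangle_{n\in\bN}$ gives $f(f^nx_0) \to fz_0$, that is, $f^{n+1}x_0 \to fz_0$. Next I would observe that the sequence $\langle f^{n+1}x_0\rangle_{n\in\bN}$ differs from $\langle f^nx_0\rangle_{n\in\bN}$ only by dropping the initial term, and deleting finitely many terms does not affect convergence or the value of a limit; hence $f^nx_0 \to fz_0$ as well. Translating back through the identification $p = d$, this says $\lim_{n\to\infty} p(f^nx_0, fz_0) = 0 = p(fz_0, fz_0)$, i.e. $fz_0$ is a limit of $\langle f^nx_0\rangle_{n\in\bN}$. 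Thus $f$ is orbitally continuous at $x_0$ for $z_0$; since $x_0$ and $z_0$ were arbitrary, $f$ is orbitally continuous.

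There is essentially no obstacle here — as the paper signals by calling it ``obvious'' — and the only step worth stating explicitly is the index shift: continuity of $f$ delivers convergence of the shifted orbit $\langle f^{n+1}x_0\rangle$, not literally of $\langle f^nx_0\rangle$, and one needs the trivial remark that the two have the same limiting behaviour. (Alternatively, one could invoke Lemma~\ref{cauchycont} or Lemma~\ref{orbcontrcont}, but for a bare metric space the direct argument above is cleaner and self-contained.)
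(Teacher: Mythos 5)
Your proof is correct and follows the same route as the paper, which simply notes that continuity on a metric space implies sequential continuity and leaves the rest implicit. Your explicit treatment of the index shift from $\langle f^{n+1}x_0\rangle_{n\in\bN}$ back to $\langle f^nx_0\rangle_{n\in\bN}$ is the only detail the paper omits, and it is handled correctly.
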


\section{Cauchy Mapping Theorems}
In this section, we state and prove Cauchy Mapping theorems (Theorems~\ref{cauchyorbcontnexpfpt} and \ref{strongcauchyorbcontnexpfpt}) for partial and strong partial metric spaces. These theorems serve as a basis for the proofs of various fixed point theorems in the context of partial metric spaces. Many of the known fixed point theorems for partial metric spaces have a similar pattern and the following theorems extract the essence of that pattern. \newline\newline
\underline{\bf Proof of Theorem~\ref{cauchyorbcontnexpfpt}}
\begin{proof}
We deal with the three cases separately.
\newline
{\bf Case I:} $f$ is non-expansive and orbitally continuous at $x_0$ for $a$.
\newline
Since $f$ is orbitally continuous at $x_0$ for $a$, it follows by Lemma~\ref{orbcontrcont} that 
$$p(a, fa) = p(fa, fa).$$
Since $f$ is non-expansive, we further obtain
$$p(a, fa) = p(fa, fa) \le p(a, a).$$
Combining this with condition {\bf(ssd)}, we have 
$$p(a, a) = p(a, fa) = p(fa, fa).$$ 
Hence, by condition {\bf(sep)}, it follows that $fa = a$, i.e., $a$ is a fixed point of $f$.
\newline\newline
{\bf Case II:} $f$ is orbitally continuous at $x_0$ for $a$ and $(X, p)$ is bounded below by $p(fa, fa)$.
\newline
Since $f$ is orbitally continuous at $x_0$ for $a$, it follows by Lemma~\ref{orbcontrcont} that 
$$p(a, fa) = p(fa, fa).$$
By condition {\bf(ssd)}, we have $p(a, a)\le p(a, fa)$. Since $(X, p)$ is bounded below by $p(fa, fa)$, it then follows that 
$$p(fa, fa) \le p(a, a) \le p(a, fa) = p(fa, fa).$$
Consequently, by condition {\bf(sep)}, we have that $fa = a$, i.e., $a$ is a fixed point of $f$.
\newline\newline
{\bf Case III:} $f$ is non-expansive and $(X, p)$ is bounded below by $p(a, a)$.
\newline
Since $f$ is non-expansive, we have for every $n\in\bN$,
$$p(f^{n+1}x_0, fa)\le p(f^nx_0, a).$$
By taking the limit as $n\to\infty$ and by applying Lemma~\ref{cauchycont}, we obtain
$$p(a, fa)\le p(a, a).$$
Since $(X, p)$ is bounded below by $p(a, a)$, it follows by condition {\bf(ssd)} that 
$$p(a, a) \le p(fa, fa) \le p(a, fa) \le p(a, a).$$ 
Hence, by condition {\bf(sep)}, we have $fa = a$, i.e., $a$ is a fixed point of $f$.
\end{proof}

\vspace{1em}
\hspace{-1.0em}\underline{\bf Proof of Theorem~\ref{strongcauchyorbcontnexpfpt}}
\begin{proof}
We deal with the two cases separately.
\newline
{\bf Case I:} $f$ is non-expansive.
\newline
Since $f$ is non-expansive, we have for every $n\in\bN$,
$$p(f^{n+1}x_0, fa)\le p(f^nx_0, a).$$
By taking the limit as $n\to\infty$ and by applying Lemma~\ref{cauchycont}, we obtain
$$p(a, fa)\le p(a, a).$$
Since $(X, p)$ is strong, it then follows from condition ({\bf sssd}) that $fa = a$.
\newline\newline
{\bf Case II:} $f$ is orbitally continuous at $x_0$ for $a$.
\newline
Since $f$ is orbitally continuous at $x_0$ for $a$, it follows by Lemma~\ref{orbcontrcont} that 
$$p(a, fa) = p(fa, fa).$$
Since $(X, p)$ is strong, it then follows from condition ({\bf sssd}) that $fa = a$.
\end{proof}

We now give an example of a function $f$ on the complete metric space $(\bR, |\cdot|)$ that is non-expansive on an interval and Cauchy on one orbit in that interval, but is not contractive on the interval or even on that particular orbit. This function has a fixed point in $\bR$ by our theorem, but it cannot be obtained by the Banach Contraction Mapping Theorem.
\begin{example}
Consider the following function $f:\bR\to\bR$ given by
$$fx = \frac{e^x\sin x}{e^{\pi/2}} + \frac{\pi}{2} - 1.$$ 
It is easy to check that $f\frac{\pi}{2} = \frac{\pi}{2}$, i.e. $x=\frac{\pi}{2}$ is a fixed point of $f$. We first note that $f$ is not contractive on any interval containing the point $\frac{\pi}{2}$, and so we cannot apply Banach's Fixed Point Theorem to obtain this fixed point. Observe that $f$ is infinitely differentiable and the first three derivatives of $f$ are given by
\begin{eqnarray*}
f'x = \frac{e^x(\sin x + \cos x)}{e^{\pi/2}} \;\;\;\;\;\;\;\;\;\; f''x = \frac{2e^x\cos x}{e^{\pi/2}} \;\;\;\;\;\;\;\;\;\; f'''x = \frac{2e^x(\cos x - \sin x)}{e^{\pi/2}}.
\end{eqnarray*}
Solving the second equation for a root, one then obtains the following:
\begin{eqnarray*}
f''\frac{\pi}{2} = 0 & \;\;\;\;\;\; & f'''\frac{\pi}{2} = -2 < 0.
\end{eqnarray*}
It follows by elementary calculus that $f'$ has a local maximum at the point $x = \frac{\pi}{2}$ with a value of $f'\frac{\pi}{2} = 1$. Since $f'$ is continuous, $f'\frac{\pi}{2} = 1$, and for sufficiently small interval $[a, b]$ around $\frac{\pi}{2}$ and for any $x < y\in [a, b]$ there exists $\xi\in(x, y)$ such that
$$|fx - fy| = |f'\xi||x - y|,$$
it follows that $f$ is not a contractive function on any interval containing $\frac{\pi}{2}$.

However, $(R, | \cdot |)$ is a metric space and hence a strong partial metric space. The function $f$ is continuous, and thus by Lemma~\ref{contimpliesorbcontmetric}, is orbitally continuous. Also observe that
\begin{eqnarray*}
f'\frac{3\pi}{4} = 0 \;\;\;\;\;\;\;\;\; f'x > 0 \mbox{ for } x\in \Big[0, \frac{3\pi}{4}\Big) \;\;\;\;\;\;\;\;\; 0 < f0 < f\frac{3\pi}{4} < \frac{3\pi}{4}.
\end{eqnarray*}
In particular, $f$ is increasing on the interval $[0, \frac{3\pi}{4}]$. Set $x_0 := 0$. Then we have
$$x_0 < fx_0 < f\frac{3\pi}{4} < \frac{3\pi}{4}.$$
Since $f$ is increasing on $[0, \frac{3\pi}{4}]$, it then follows by simple induction that 
$$x_0 < fx_0 < f^2x_0 < f^3x_0 < \ldots < \frac{3\pi}{4}.$$
Thus, we get a bounded increasing sequence $\langle f^nx_0\rangle_{n\in\bN}$ in $\bR$. By the Monotone Convergence Theorem, this sequence is Cauchy and converges to some $x_1\in\bR$ such that $x_1\in [0, \frac{3\pi}{4}]$. By Theorem~\ref{strongcauchyorbcontnexpfpt}, it follows that $x_1$ is a fixed point of $f$. A numerical simulation then shows that indeed $x_1 = \frac{\pi}{2}$.
\end{example}

It should be noted that in the above example, $f$ is a continuous function on the interval $[0, \frac{3\pi}{4}]$ with $0 < f0$ and $f\frac{3\pi}{4} < \frac{3\pi}{4}$. Thus, the existence of a fixed point of $f$ in the interval $[0, \frac{3\pi}{4}]$ follows immediately from the Intermediate Value Theorem applied to the function $gx = fx - x$. However, what Theorem~\ref{strongcauchyorbcontnexpfpt} (specialized to the case of metric spaces) does additionally is that it gives an iterative method for computing that fixed point starting from a nearby point. Also it is easy to see that $x = \frac{\pi}{2}$ is the unique maxima of $f'$ in the interval $[0, \frac{3\pi}{4}]$, and thus $f$ is non-expansive on the interval $[0, \frac{3\pi}{4}]$.\newline

We like to end this section with the following remark.
\begin{rmk}
The assumptions of lower bounds of $(X, p)$ in conditions (2) and (3) of Theorem~\ref{cauchyorbcontnexpfpt} look artificial and probably often untenable in the grand scheme of things. But the reason we have listed them is to emphasize their analogy with what is going on under the hood in partial metric spaces bounded below by zero. In these situations, people often consider a contractive map (or at the very least a non-expansive and orbitally contractive map) which has the property that $\lim_{m, n\to\infty} p(f^mx_0, f^nx_0) = 0$ (for some $x_0$), where zero is incidentally the lower bound of the space, and one ends up getting a fixed point for similar reasons as explained in the proof of our theorem. The point we are trying to make here is that the lower bound of zero is a subtle third condition in such theorems, and that, in its absence, orbital continuity is probably the right alternative to fall back on. Even nicer is the fact that this whole point is moot if we are in a strong partial metric space because then the existence or the non-existence of a lower bound of the space has no effect on the existence of a fixed point as Theorem~\ref{strongcauchyorbcontnexpfpt} shows.
\end{rmk}

\section{Orbitally $r$-contractive maps}
Let $(X, p)$ be a partial metric space. In the previous section, we showed the existence of a fixed point for a function $f:X\to X$ under the assumption that there is an element $x_0\in X$ such that $f$ is Cauchy at $x_0$. In this section, we give an example of a particular class of functions, which we call ``orbitally $r$-contractive'', that in fact satisfies this condition. Lemma~\ref{orbcontrCauchy} establishes this claim. These functions are our analogues of contractive (rather, orbitally contractive) functions suitable to our context.
\begin{defn}		\label{selfcontmap}
Take a partial metric space $(X, p)$, an element $x_0\in X$, a number $r\in\bR$, and a map $f: X\to X$. We say $f$ is {\em orbitally $r$-contractive at $x_0$} if there exists a real number $c$ with $0\le c < 1$ such that the following two conditions hold for all $n\in\bN$:
\begin{itemize}
\item $r\le p(f^nx_0, f^nx_0)$
\item $p(f^{n+2}x_0, f^{n+1}x_0) \le r + c^{n+1}\,|p(fx_0, x_0)|$.
\end{itemize}
And we say $f$ is {\em orbitally $r$-contractive} if $f$ is orbitally $r$-contractive at every $x\in X$.
\end{defn}

Observe that if $(X, p)$ is bounded below by zero and $f$ is contractive, then $f$ is orbitally $0$-contractive, but not conversely. Thus, orbitally $r$-contractive maps form a strictly wider class than contractive maps even for partial metric spaces bounded below by zero. 
\begin{lem}		\label{orbcontrCauchy}
For each partial metric space $(X, p)$, element $x_0\in X$, real number $r\in\bR$, and map $f:X\to X$ orbitally $r$-contractive at $x_0$, the orbit $\langle f^nx_0\rangle_{n\in\bN}$ of $x_0$ under $f$ is a Cauchy sequence in $(X, p)$ with $\lim_{m, n\to\infty}p(f^mx_0, f^nx_0) = r$.
\end{lem}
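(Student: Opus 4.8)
The plan is to show that the orbit $\langle f^nx_0\rangle_{n\in\bN}$ is Cauchy in the symmetrization metric and that the common limit of $p(f^mx_0, f^nx_0)$ equals $r$. The two hypotheses in Definition~\ref{selfcontmap} pull in opposite directions: the first, $r\le p(f^nx_0, f^nx_0)$, gives a uniform lower bound, while the second, $p(f^{n+2}x_0, f^{n+1}x_0)\le r + c^{n+1}|p(fx_0, x_0)|$, forces consecutive distances down towards $r$ at a geometric rate. I would write $x_n := f^nx_0$ and $K := |p(fx_0, x_0)|$ throughout to keep the notation light.

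First I would establish the key estimate that for $m > n$ the distance $p(x_m, x_n)$ is squeezed between $r$ (from below, using {\bf(ssd)} together with the first hypothesis, since $p(x_n, x_n)\le p(x_m, x_n)$ and $p(x_n,x_n)\ge r$) and something of the form $r + c^{n}\cdot(\text{bounded})$ from above. The upper bound is where the partial metric triangle inequality {\bf(ptri)} does the work: iterating $p(x_i, x_j)\le p(x_i, x_{i+1}) + p(x_{i+1}, x_j) - p(x_{i+1}, x_{i+1})$ down a chain from $x_m$ to $x_n$, each inserted "pivot" self-distance $p(x_{i+1}, x_{i+1})$ is $\ge r$ and so is subtracted off in our favour, while each consecutive term $p(x_{i+2}, x_{i+1})$ is $\le r + c^{i+1}K$. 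Telescoping a chain of length $m-n$ one gets roughly $p(x_m, x_n)\le r + K\sum_{i\ge n} c^{i+1} = r + \frac{c^{n+1}}{1-c}K$, where all the subtracted $r$'s and the added $r$'s from the consecutive-distance bounds cancel down to a single $r$. (I would need to be a little careful indexing which triangle inequalities to apply — the hypothesis bounds $p(x_{n+2}, x_{n+1})$, i.e. gaps of size one starting from index $n+1$ — but this is a routine bookkeeping matter, and if the direct chain has an awkward base case one can first bound $p(x_{n+1}, x_n)$ separately from $p(x_{n+2}, x_{n+1})$ via one more application of {\bf(ptri)}.)

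Having both bounds $r\le p(x_m, x_n)\le r + \frac{c^{n+1}}{1-c}K$ for all $m > n$, the conclusion is immediate: given $\varepsilon > 0$, choose $N$ with $\frac{c^{N+1}}{1-c}K < \varepsilon$, and then for all $m, n\ge N$ we have $|p(x_m, x_n) - r| < \varepsilon$. This simultaneously shows $\lim_{m,n\to\infty}p(x_m, x_n) = r$ and that the sequence is Cauchy in the sense of Definition~\ref{cauchylimit}, which is exactly the assertion of the lemma.

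The main obstacle I anticipate is purely combinatorial: setting up the telescoping application of {\bf(ptri)} cleanly so that the "$-p(z,z)$" terms and the "$+r$" terms from the consecutive-distance bounds collapse to exactly one leftover $r$, rather than accumulating an error linear in $m-n$. The trick is that every self-distance that gets subtracted is $\ge r$ and there is one more pivot than there are "new" gaps beyond the telescoped first gap, so the arithmetic works out — but verifying the exact count of $r$'s is the step that needs care. Everything else (the lower bound, extracting $\varepsilon$, geometric series summation) is routine.
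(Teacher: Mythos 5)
Your proposal is correct and follows essentially the same route as the paper: telescoping \textbf{(ptri)} down the orbit, cancelling each subtracted self-distance ($\ge r$) against the $+r$ in the consecutive-distance bound, summing the geometric series to get $p(x_m,x_n)\le r + \frac{c^{n}}{1-c}K$, and combining with the lower bound $r\le p(x_n,x_n)\le p(x_m,x_n)$ from \textbf{(ssd)}. The indexing care you flag is exactly the bookkeeping the paper carries out, and it poses no real difficulty since only large $n$ matters for the limit.
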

\begin{proof}
Since $f$ is orbitally $r$-contractive at $x_0$, there is $0\le c < 1$ such that for all $n\in\bN$
\begin{eqnarray*}
& & r\le p(f^nx_0, f^nx_0) \\
& & p(f^{n+2}x_0, f^{n+1}x_0) \le r + c^{n+1}\,|p(fx_0, x_0)|.
\end{eqnarray*}
Let $m > n\ge 0$ be arbitrary. Write $m = n+k+1$ for some $k \ge 0$. Then we have
\begin{eqnarray*}
p(f^{n+k+1}x_0, f^nx_0) & \le & p(f^{n+k+1}x_0, f^{n+k}x_0) + p(f^{n+k}x_0, f^nx_0) - p(f^{n+k}x_0, f^{n+k}x_0) \\
& \le & r + c^{n+k}\,|p(fx_0, x_0)| + p(f^{n+k}x_0, f^nx_0) - r \\
& \le & c^{n+k}\,|p(fx_0, x_0)| + p(f^{n+k}x_0, f^nx_0) \\
& \le & \cdots \\
& \le & (c^{n+k} + \cdots + c^{n+1})\,|p(fx_0, x_0)| + p(f^{n+1}x_0, f^nx_0) \\
& \le & (c^{n+k} + \cdots + c^{n+1})\,|p(fx_0, x_0)| + r + c^n|p(fx_0, x_0)| \\
& \le & (c^{n+k} + \cdots + c^{n+1} + c^n)\,|p(fx_0, x_0)| + r \\
& \le & r + c^n\dfrac{1 - c^{k+1}}{1 - c}\,|p(fx_0, x_0)| \\
& \le & r + c^n\dfrac{1}{1 - c}\,|p(fx_0, x_0)|.
\end{eqnarray*}
Taking the limit as $n\to\infty$, the right hand side of the above inequality goes to $r$, since $0\le c < 1$. Since also $r\le p(f^nx_0, f^nx_0) \le p(f^{n+k+1}x_0, f^nx_0)$ for all $k, n\in\bN$, we have
$$\lim_{m, n\to\infty}p(f^mx_0, f^nx_0) = r,$$
and hence $\langle f^nx_0\rangle_{n\in\bN}$ is Cauchy.
\end{proof}

It thus follows that an orbitally $r$-contractive map is Cauchy for every $r\in\bR$. In particular, if $(X, p)$ is bounded below by zero and $f:X\to X$ is contractive, then $f$ is Cauchy (since $f$ is orbitally 0-contractive). Because of this property, the orbitally $r$-contractive functions provide more examples of fixed point theorems. But for that we need the existence of special limits. The following weakening of completeness suffices for our fixed point theorems to work. 
\begin{defn}		\label{orbcomp}
Given a partial metric space $(X, p)$ and a map $f:X\to X$, the space $(X, p)$ is called {\em orbitally complete for $f$} if every Cauchy sequence in $(X, p)$ of the form $\langle f^nx_0\rangle_{n\in\bN}$, for $x_0\in X$, has a special limit $a\in X$.
\end{defn}

Combining this with the results of the previous section, we obtain the following.
\begin{thm}		\label{cauchyorbcontractivefpt}
Let $(X, p)$ be a partial metric space, $r\in\bR$, $x_0\in X$ and $f:X\to X$ be a map such that $f$ is orbitally $r$-contractive at $x_0$ and $(X, p)$ is orbitally complete for $f$. Further assume that one of the following holds:
\begin{enumerate}
\item $f$ is non-expansive and orbitally continuous at $x_0$;
\item $f$ is non-expansive and $(X, p)$ is bounded below by $r$.
\end{enumerate}
Then there exists $a\in X$ such that $fa = a$ and $p(a, a) = r$. Furthermore, if $(X, p)$ is bounded below by zero and $f$ is contractive, then the fixed point is unique.
\end{thm}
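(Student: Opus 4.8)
The plan is to obtain the statement by combining Lemma~\ref{orbcontrCauchy} with the Cauchy Mapping Theorem~\ref{cauchyorbcontnexpfpt}. First I would apply Lemma~\ref{orbcontrCauchy}: since $f$ is orbitally $r$-contractive at $x_0$, the orbit $\langle f^nx_0\rangle_{n\in\bN}$ is Cauchy with $\lim_{m,n\to\infty}p(f^mx_0,f^nx_0)=r$, so $f$ is Cauchy at $x_0$. Since $(X,p)$ is orbitally complete for $f$, this Cauchy sequence has a special limit $a\in X$, and by the definition of special limit (Definition~\ref{cauchylimit}) we get $p(a,a)=\lim_{m,n\to\infty}p(f^mx_0,f^nx_0)=r$. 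Thus $f$ is Cauchy at $x_0$ with special limit $a$, which is precisely the running hypothesis of Theorem~\ref{cauchyorbcontnexpfpt}.

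Next I would feed each of the two cases into Theorem~\ref{cauchyorbcontnexpfpt}. Under hypothesis (1), $f$ is non-expansive and orbitally continuous at $x_0$, hence in particular orbitally continuous at $x_0$ for $a$; this is case (1) of Theorem~\ref{cauchyorbcontnexpfpt}, so $fa=a$. Under hypothesis (2), $f$ is non-expansive and $(X,p)$ is bounded below by $r$, which, since $p(a,a)=r$, is the same as being bounded below by $p(a,a)$; this is case (3) of Theorem~\ref{cauchyorbcontnexpfpt}, so again $fa=a$. In both cases $a$ is a fixed point of $f$ with $p(a,a)=r$, which is the main claim.

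For the uniqueness addendum, assume in addition that $(X,p)$ is bounded below by zero and $f$ is contractive with constant $0\le c<1$; then $f$ is in particular non-expansive and orbitally $0$-contractive, so $r=0$ and the above applies to give a fixed point. Now let $a,b$ be any fixed points of $f$. Then $p(a,b)=p(fa,fb)\le c\,p(a,b)$, so $(1-c)p(a,b)\le 0$ and hence $p(a,b)\le 0$; together with the lower bound $p(a,b)\ge 0$ this gives $p(a,b)=0$, and the same argument gives $p(a,a)=p(b,b)=0$. Therefore $p(a,a)=p(a,b)=p(b,b)$, and condition {\bf(sep)} forces $a=b$.

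I do not anticipate any genuine difficulty: the theorem is essentially a packaging of the earlier results. The only points needing attention are the exact matching of hypotheses — recognizing that ``bounded below by $r$'' becomes ``bounded below by $p(a,a)$'' once $p(a,a)=r$ is known, so that it is case (3) and not case (2) of Theorem~\ref{cauchyorbcontnexpfpt} that is invoked — and, in the uniqueness part, using the lower bound of zero to turn the contraction inequalities into the equalities on which {\bf(sep)} can act.
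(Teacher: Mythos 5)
Your proposal is correct and follows essentially the same route as the paper: Lemma~\ref{orbcontrCauchy} plus orbital completeness produce the special limit $a$ with $p(a,a)=r$, and then cases (1) and (3) of Theorem~\ref{cauchyorbcontnexpfpt} give the fixed point, with the contraction inequality and the zero lower bound yielding uniqueness via {\bf(sep)}. Your version is in fact slightly more explicit than the paper's at the final step, where you justify $p(a,a)=p(a,b)=p(b,b)=0$ before invoking {\bf(sep)} rather than passing directly from $p(a,b)=0$ to $a=b$.
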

\begin{proof}
By Lemma~\ref{orbcontrCauchy}, the orbit $\langle f^nx_0\rangle_{n\in\bN}$ of $x_0$ under $f$ is a Cauchy sequence with $\lim_{m, n\to\infty} p(f^mx_0, f^nx_0) = r$. Since $(X, p)$ is orbitally complete for $f$, there is an element $a\in X$ such that $a$ is a special limit of $\langle f^nx_0\rangle_{n\in\bN}$. By Definition~\ref{cauchylimit}, we have
$$p(a, a) = \lim_{m, n\to\infty} p(f^mx_0, f^nx_0) = r.$$ 
Finally, by Theorem~\ref{cauchyorbcontnexpfpt}, we have $fa = a$, i.e., $a$ is a fixed point of $f$.

Now suppose $(X, p)$ is bounded below by zero and $f$ is contractive. Suppose $a, b\in X$ are two fixed points of $f$. Then
$$p(a, b) = p(fa, fb)\le c\,p(a, b),$$
where $c$ is a real number such that $0\le c < 1$. If $c = 0$, then it implies $p(a, b) = 0$. If $0 < c < 1$, then $p(a, b)\not=0$ implies
$$p(a, b) < p(a, b),$$
which is absurd. Hence, in all cases, we have $p(a, b) = 0$, which implies $a = b$.
\end{proof}

An analogous proof using Theorem~\ref{strongcauchyorbcontnexpfpt} instead of Theorem~\ref{cauchyorbcontnexpfpt} then gives the following.
\begin{thm}		\label{strongcauchyorbcontractivefpt}
Let $(X, p)$ be a strong partial metric space, $r\in\bR$, $x_0\in X$ and $f:X\to X$ be a map such that $f$ is orbitally $r$-contractive at $x_0$ and $(X, p)$ is orbitally complete for $f$. Further assume that one of the following holds:
\begin{enumerate}
\item $f$ is non-expansive;
\item $f$ is orbitally continuous at $x_0$.
\end{enumerate}
Then there exists $a\in X$ such that $fa = a$ and $p(a, a) = r$.
\end{thm}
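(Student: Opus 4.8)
The plan is to follow the proof of Theorem~\ref{cauchyorbcontractivefpt} essentially verbatim, replacing the appeal to Theorem~\ref{cauchyorbcontnexpfpt} at the final step by the stronger Theorem~\ref{strongcauchyorbcontnexpfpt}. First I would apply Lemma~\ref{orbcontrCauchy} to the orbitally $r$-contractive map $f$ at $x_0$: it gives that the orbit $\langle f^nx_0\rangle_{n\in\bN}$ is Cauchy in $(X, p)$ with $\lim_{m, n\to\infty} p(f^mx_0, f^nx_0) = r$. Since $(X, p)$ is orbitally complete for $f$, this Cauchy sequence has a special limit $a\in X$, and by Definition~\ref{cauchylimit} we get $p(a, a) = \lim_{m, n\to\infty} p(f^mx_0, f^nx_0) = r$, which settles the self-distance assertion.

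It then remains to verify $fa = a$. By construction $f$ is Cauchy at $x_0$ with special limit $a$, which is exactly the standing hypothesis of Theorem~\ref{strongcauchyorbcontnexpfpt}, and $(X, p)$ is strong. Under assumption (1), $f$ is non-expansive, so case (1) of that theorem applies; under assumption (2), $f$ is orbitally continuous at $x_0$, hence in particular orbitally continuous at $x_0$ for $a$, so case (2) applies. In either case $a$ is a fixed point of $f$, which completes the argument.

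I expect no real obstacle here: all the substance lives in Lemma~\ref{orbcontrCauchy} and in Theorem~\ref{strongcauchyorbcontnexpfpt}, and the only thing to check is that the hypotheses match up. The single point deserving a moment's care is that ``orbitally continuous at $x_0$'' in assumption (2) is formally stronger than the ``orbitally continuous at $x_0$ for $a$'' required to invoke Theorem~\ref{strongcauchyorbcontnexpfpt}, so the implication runs in the correct direction; and, in contrast to the non-strong case, no lower-bound hypothesis on the space is needed, since strongness already forces the conclusion via condition ({\bf sssd}). Finally, unlike Theorem~\ref{cauchyorbcontractivefpt}, the statement claims no uniqueness, so there is nothing further to establish.
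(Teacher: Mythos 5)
Your proposal is correct and is exactly the argument the paper intends: it explicitly states that Theorem~\ref{strongcauchyorbcontractivefpt} follows by the proof of Theorem~\ref{cauchyorbcontractivefpt} with Theorem~\ref{strongcauchyorbcontnexpfpt} substituted for Theorem~\ref{cauchyorbcontnexpfpt}. Your check that ``orbitally continuous at $x_0$'' implies ``orbitally continuous at $x_0$ for $a$'' is the right (and only) hypothesis-matching point to verify.
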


We end this section with the following comment. Note the three main differences between Theorem~\ref{cauchyorbcontractivefpt} and Theorem~\ref{contmapthm}: first, we have achieved $p(a, a) = r$ instead of $p(a, a) = 0$; second, we have lost the uniqueness of a fixed point; and third, and most importantly, we have got rid of the ``bounded below by zero'' condition. In other words, our theorem works for partial metric spaces with negative distances.

\section{Orbitally $\phi_r$-contractive maps}
In this section, we define another class of functions, which we call ``orbitally $\phi_r$-contractive'', that also satisfies the property of being Cauchy and for which we get similar fixed point theorems. Lemma~\ref{phircontmaplem} establishes this claim. Analogous functions have been studied by several authors. Boyd and Wong \cite{DWBSWW} introduced the notion of ``$\Phi$-contraction'' on a metric space $(X, d)$ as a map $f:X\to X$ for which there exists an upper semi-continuous function $\Phi : [0, \infty)\to [0, \infty)$ such that
$$d(fx, fy) \le \Phi(d(x, y))\;\;\;\;\;\mbox{ for all } x, y\in X.$$ 
Alber and Guerre-Delabriere \cite{YIASGD} generalized the notion of $\Phi$-contraction by defining the notion of ``weak $\phi$-contraction'' for an Hilbert space $(X, d)$ as a map $f:X\to X$ for which there exists a strictly increasing map $\phi : [0, \infty)\to [0, \infty)$ with $\phi(0) = 0$ such that
$$d(fx, fy) \le d(x, y) - \phi(d(x, y))\;\;\;\;\;\mbox{ for all } x, y\in X.$$
Karapinar \cite{EK2} extended this definition to partial metric spaces bounded below by zero. In this paper, we generalize this further to all partial metric spaces.

\begin{defn}	\label{phircont}
Take a partial metric space $(X, p)$, an element $x_0\in X$, a number $r\in\bR$, and a map $f:X\to X$. We say $f$ is {\em orbitally $\phi_r-$contractive at $x_0$} if there exists a continuous non-decreasing function $\phi: [r, \infty)\to [0, \infty)$ with $\phi(r) = 0$ and $\phi(t) > 0$ for all $t > r$ such that the following two conditions hold for all $m, n\in\bN$:
\begin{itemize}
\item $r \le p(f^nx_0, f^nx_0)$
\item $p(f^{m+1}x_0, f^{n+1}x_0) \le p(f^mx_0, f^nx_0) - \phi(p(f^mx_0, f^nx_0))$.
\end{itemize}
We say $f$ is {\em orbitally $\phi_r-$contractive} if it is orbitally $\phi_r-$contractive at every $x\in X$.
\end{defn}
It is clear from the above definition that, in the context of a partial metric space bounded below by zero, a weak $\phi$-contraction map in the sense of \cite{EK2} is an orbitally $\phi_0$-contractive map in our sense (but not conversely).




\begin{lem}		\label{phircontmaplem}
For each partial metric space $(X, p)$, element $x_0\in X$, real number $r\in\bR$, and map $f:X\to X$ orbitally $\phi_r$-contractive at $x_0$, the orbit $\langle f^nx_0\rangle_{n\in\bN}$ of $x_0$ under $f$ is a Cauchy sequence in $(X, p)$ with $\lim_{m,n\to\infty} p(f^mx_0, f^nx_0) = r$.
\end{lem}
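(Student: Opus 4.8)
The plan is to mimic the structure of the proof of Lemma~\ref{orbcontrCauchy}, but to replace the explicit geometric-series estimate with a subadditivity-type argument driven by the function $\phi$. First I would show that the sequence of "consecutive distances" $d_n := p(f^{n+1}x_0, f^nx_0)$ is non-increasing: by the second bullet of Definition~\ref{phircont} applied with $m = n+1$ and $n$ in place of $m, n$, we get $d_{n+1} \le d_n - \phi(d_n) \le d_n$, using $\phi \ge 0$. Hence $\langle d_n\rangle$ is non-increasing and bounded below (by $r$, since $p(f^{n+1}x_0, f^nx_0) \ge p(f^nx_0, f^nx_0) \ge r$ by {\bf(ssd)} and the first bullet), so it converges to some $\ell \ge r$. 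Taking the limit in $d_{n+1} \le d_n - \phi(d_n)$ and using continuity of $\phi$ gives $\ell \le \ell - \phi(\ell)$, so $\phi(\ell) \le 0$, hence $\phi(\ell) = 0$; by the hypothesis that $\phi(t) > 0$ for $t > r$ we conclude $\ell = r$. Thus $\lim_{n\to\infty} p(f^{n+1}x_0, f^nx_0) = r$.

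The main obstacle is the step from "consecutive distances tend to $r$" to "the sequence is Cauchy", i.e. $\lim_{m,n\to\infty} p(f^mx_0, f^nx_0) = r$; the telescoping trick of Lemma~\ref{orbcontrCauchy} is not directly available because there is no geometric rate. I would argue by contradiction in the style of the classical Boyd–Wong / weak-contraction arguments. The lower bound is easy: for $m > n$, {\bf(ssd)} and the first bullet give $p(f^mx_0, f^nx_0) \ge p(f^nx_0, f^nx_0) \ge r$, so $\liminf \ge r$. For the upper bound, suppose for contradiction that $\limsup_{m,n\to\infty} p(f^mx_0, f^nx_0) = r + \varepsilon$ for some $\varepsilon > 0$. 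Pick sequences $m_k > n_k \to \infty$ with $p(f^{m_k}x_0, f^{n_k}x_0) \to r + \varepsilon$ from above; by choosing, for each $n_k$, the least $m_k > n_k$ with $p(f^{m_k}x_0, f^{n_k}x_0) \ge r + \varepsilon$ (such an index exists for $k$ large since the $\limsup$ is $r+\varepsilon$), and using the partial metric triangle inequality together with $p(f^{m_k}x_0, f^{m_k-1}x_0) \to r$ and $p(f^{m_k-1}x_0, f^{m_k-1}x_0) \to r$ (the latter forced since $r \le p(f^{m_k-1}x_0,f^{m_k-1}x_0) \le p(f^{m_k}x_0,f^{m_k-1}x_0)$), one shows $p(f^{m_k}x_0, f^{n_k}x_0) \to r + \varepsilon$ exactly. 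Then applying the second bullet of Definition~\ref{phircont} with indices $m_k - 1$ and $n_k - 1$, together with two more applications of {\bf(ptri)} to relate $p(f^{m_k-1}x_0, f^{n_k-1}x_0)$ to $p(f^{m_k}x_0, f^{n_k}x_0)$ via the consecutive distances $d_{m_k-1}, d_{n_k-1} \to r$, we get in the limit
$$r + \varepsilon \le (r + \varepsilon) - \phi(r + \varepsilon),$$
using continuity of $\phi$ to pass $\phi(p(f^{m_k-1}x_0, f^{n_k-1}x_0)) \to \phi(r+\varepsilon)$. This forces $\phi(r+\varepsilon) \le 0$, contradicting $\phi(r+\varepsilon) > 0$ since $r + \varepsilon > r$.

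Combining the $\liminf \ge r$ bound with the contradiction just obtained yields $\lim_{m,n\to\infty} p(f^mx_0, f^nx_0) = r$, which is exactly the statement that $\langle f^nx_0\rangle_{n\in\bN}$ is a Cauchy sequence in $(X, p)$ with limit value $r$. The delicate points to get right in the writeup are: (i) the bookkeeping that shows $p(f^{m_k}x_0, f^{n_k}x_0)$ converges to $r+\varepsilon$ and not merely has it as a subsequential limit, which is where the "least such $m_k$" minimality is used; and (ii) controlling all the self-distance terms $p(f^jx_0, f^jx_0)$ appearing in the triangle inequalities, all of which are squeezed to $r$ between the lower bound $r$ and the consecutive distances. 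Everything else is a routine limit manipulation.
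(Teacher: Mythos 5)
Your proposal is correct and follows essentially the same route as the paper: show the consecutive distances $t_n=p(f^{n+1}x_0,f^nx_0)$ decrease to $r$, then run the Boyd--Wong-style contradiction with a minimal index $m_k$, squeeze $s_k=p(f^{m_k}x_0,f^{n_k}x_0)$ to the threshold value via {\bf(ptri)} and the self-distance bounds, and apply the contractive inequality plus continuity of $\phi$ to force $\phi$ of that value to vanish. The only (immaterial) differences are that you shift indices down to $m_k-1,\,n_k-1$ where the paper shifts up to $m(k)+1,\,n(k)+1$, and you should phrase the contradiction hypothesis as ``some $\varepsilon>0$ admits infinitely many pairs with $p\ge r+\varepsilon$'' rather than fixing the value of the $\limsup$, which need not be attained or finite.
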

\begin{proof}
Set $x_{n+1} := fx_n$ for $n\in\bN$. \newline 
Let $\phi: [r, \infty)\to [0, \infty)$ witness the fact that $f$ is orbitally $\phi_r-$contractive at $x_0$. In particular, $\phi$ is a continuous non-decreasing function with $\phi(r) = 0$ and $\phi(t) > 0$ for all $t > r$. It follows that
$$r\le p(x_{n+2}, x_{n+1}) = p(fx_{n+1}, fx_n) \le p(x_{n+1}, x_n) - \phi(p(x_{n+1}, x_n)).$$
Set $t_n := p(x_{n+1}, x_n)$. Then one obtains
\begin{eqnarray}		\label{tnseq}
r\le t_{n+1}\le t_n - \phi(t_n) \le t_n.
\end{eqnarray}
This implies that $\langle t_n\rangle_{n\in\bN}$ is a non-increasing sequence of real numbers bounded below by $r$, and hence converges to some $L\ge r$. We claim that $L = r:$ otherwise $L > r$, and hence $\phi(L) > 0$. Since $\phi$ is non-decreasing, we get $\phi(L) \le \phi(t_n)$ for all $n\in\bN$. Due to (\ref{tnseq}), we have $t_{n+1} \le t_n - \phi(t_n) \le t_n - \phi(L)$, and so
$$t_{n+2} \le t_{n+1} - \phi(t_{n+1}) \le t_n - \phi(t_n) - \phi(t_{n+1}) \le t_n - 2\phi(L).$$
Inductively we obtain $t_{n+k} \le t_n - k\phi(L)$, which is a contradiction for large enough $k\in\bN$. Thus, we have $\phi(L) = 0$, and hence $L = r$. Consequently, $\lim_{n\to\infty} p(x_{n+1}, x_n) = r$.

Now we show that $\langle x_n\rangle_{n\in\bN}$ is a Cauchy sequence in $(X, p)$ with $\lim_{m, n\to\infty} p(x_m, x_n) = r$. Suppose it is not the case. Then there exists $\epsilon_0 > r$ and two sequences of integers $\langle n(k)\rangle_{k\in\bN}$ and $\langle m(k)\rangle_{k\in\bN}$ such that $m(k) > n(k)\ge k$ and
\begin{eqnarray}		\label{greatereps}
s_k := p(x_{m(k)}, x_{n(k)}) \ge\epsilon_0
\end{eqnarray}
for all $k\in\bN$. We also assume, for each $k$, that $m(k)$ is the smallest number exceeding $n(k)$ for which (\ref{greatereps}) holds. In particular, $p(x_{m(k) - 1}, x_{n(k)}) < \epsilon_0$ for $k\in\bN$. Thus, we have
\begin{eqnarray*}
\epsilon_0 \;\le\; s_k & = & p(x_{m(k)}, x_{n(k)}) \\
& \le & p(x_{m(k)}, x_{m(k) - 1}) + p(x_{m(k) - 1}, x_{n(k)}) - p(x_{m(k) - 1}, x_{m(k) - 1}) \\
& \le & t_{m(k) - 1} + \epsilon_0 - r \\
& \le & t_k + \epsilon_0 - r.
\end{eqnarray*}
Since $\lim_{k\to\infty} t_k = r$, we have $\lim_{k\to\infty} (t_k + \epsilon_0 - r) = r + \epsilon_0 - r = \epsilon_0$. Consequently,
$$\lim_{k\to\infty} s_k = \epsilon_0.$$ 
On the other hand,
\begin{eqnarray*}
s_k & = & p(x_{m(k)}, x_{n(k)}) \\
& \le & p(x_{m(k)}, x_{m(k) + 1}) + p(x_{m(k) + 1}, x_{n(k)}) - p(x_{m(k) + 1}, x_{m(k) + 1}) \\
& \le & t_{m(k)} - r + p(x_{m(k) + 1}, x_{n(k)}) \\
& \le & t_{m(k)} - r + p(x_{m(k) + 1}, x_{n(k) + 1}) + p(x_{n(k) + 1}, x_{n(k)}) - p(x_{n(k) + 1}, x_{n(k) + 1}) \\
& \le & t_{m(k)} - r + t_{n(k)} - r + p(x_{m(k) + 1}, x_{n(k) + 1}) \\
& \le & 2t_k - 2r + p(x_{m(k) + 1}, x_{n(k) + 1}) \\
& \le & 2t_k - 2r + p(x_{m(k)}, x_{n(k)}) - \phi(p(x_{m(k)}, x_{n(k)})) \\
& \le & 2t_k - 2r + s_k - \phi(s_k) \\
\implies \phi(s_k) & \le & 2t_k - 2r
\end{eqnarray*}
Again, since $\lim_{k\to\infty} t_k = r$, we have $\lim_{k\to\infty} (2t_k - 2r) = 2r - 2r = 0$. Since $\phi(s_k)\ge 0$ for all $k\in\bN$ (by the definition of $\phi$), we get that $\lim_{k\to\infty} \phi(s_k) = 0$. Since $\phi$ is continuous, it then follows that
$$0 = \lim_{k\to\infty}\phi(s_k) = \phi\Big(\lim_{k\to\infty} s_k\Big) = \phi(\epsilon_0),$$
which contradicts the fact that $\epsilon_0 > r$. Hence, $\langle x_n\rangle_{n\in\bN}$ is Cauchy.
\end{proof}

It thus follows that an orbitally $\phi_r$-contractive map is Cauchy for every $r\in\bR$. A similar claim for weak $\phi$-contractive functions in the context of partial metric spaces bounded below by zero has been proved in \cite[Theorem 2.2]{EK2}. However, the proof there is incomplete. The author proved the claim under the assumption that $s_n := \sup\{p(x_i, x_j)\mid i, j\ge n\}$ exists for all $n$, for which he did not provide any justification. Our proof above is different and does not assume/require the existence of such a supremum.

Because of the Cauchy property, the orbitally $\phi_r$-contractive functions provide further examples of fixed point theorems. As a corollary, we obtain the following generalization of \cite[Theorem 2.2]{EK2}.
\begin{thm}\label{phircontmapthm}
Let $(X, p)$ be a partial metric space, $r\in\bR$, $x_0\in X$, $\phi:[r, \infty)\to [0, \infty)$ be a continuous and non-decreasing function with $\phi(r ) = 0$ and $\phi(t) > 0$ for all $t > r$, and $f:X\to X$ be a map such that $f$ is orbitally $\phi_r$-contractive at $x_0$ and $(X, p)$ is orbitally complete for $f$. Further assume that one of the following holds:
\begin{enumerate}
\item $f$ is non-expansive and orbitally continuous at $x_0$; 
\item $f$ is non-expansive and $(X, p)$ is bounded below by $r$.
\end{enumerate}
Then there exists $a\in X$ such that $fa = a$ and $p(a, a) = r$.
\end{thm}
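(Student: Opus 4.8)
The plan is to run essentially the same argument as in the proof of Theorem~\ref{cauchyorbcontractivefpt}, only replacing the appeal to Lemma~\ref{orbcontrCauchy} by an appeal to Lemma~\ref{phircontmaplem}. Concretely, I would first invoke Lemma~\ref{phircontmaplem} with the given $\phi$: since $f$ is orbitally $\phi_r$-contractive at $x_0$, the orbit $\langle f^nx_0\rangle_{n\in\bN}$ is a Cauchy sequence in $(X,p)$ with $\lim_{m,n\to\infty}p(f^mx_0, f^nx_0) = r$. Because $(X,p)$ is orbitally complete for $f$, this Cauchy orbit has a special limit $a\in X$, and by the definition of a special limit (Definition~\ref{cauchylimit}) we get immediately
$$p(a,a) = \lim_{m,n\to\infty}p(f^mx_0, f^nx_0) = r.$$
This already delivers the self-distance claim $p(a,a) = r$; it remains only to see that $a$ is a fixed point.

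For the fixed-point conclusion I would feed the situation into Theorem~\ref{cauchyorbcontnexpfpt}, checking that the relevant one of its three alternatives is satisfied. Under hypothesis (1) of the present theorem, $f$ is non-expansive and orbitally continuous at $x_0$ --- hence, by Definition~\ref{orbcont}, orbitally continuous at $x_0$ for every element of $X$, in particular for $a$ --- so alternative (1) of Theorem~\ref{cauchyorbcontnexpfpt} applies. Under hypothesis (2), $f$ is non-expansive and $(X,p)$ is bounded below by $r = p(a,a)$, so alternative (3) of Theorem~\ref{cauchyorbcontnexpfpt} applies. In either case Theorem~\ref{cauchyorbcontnexpfpt} yields $fa = a$, which completes the proof.

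There is essentially no hard step here: the substantive content has already been packaged into Lemma~\ref{phircontmaplem} (the Cauchy property of the orbit, which is the real work and was carried out in the preceding lemma) and into Theorem~\ref{cauchyorbcontnexpfpt} (the passage from a Cauchy orbit with a special limit to a fixed point). The only point requiring a moment's care is the bookkeeping of matching the two numbered alternatives of the present statement with the correct alternatives of Theorem~\ref{cauchyorbcontnexpfpt}, and in particular noticing that ``orbitally continuous at $x_0$'' is formally stronger than ``orbitally continuous at $x_0$ for $a$'', which is all that Theorem~\ref{cauchyorbcontnexpfpt} requires.
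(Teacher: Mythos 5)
Your proposal is correct and follows exactly the paper's own route: Lemma~\ref{phircontmaplem} gives the Cauchy orbit with limit $r$, orbital completeness yields the special limit $a$ with $p(a,a)=r$, and Theorem~\ref{cauchyorbcontnexpfpt} (alternative (1) for hypothesis (1), alternative (3) for hypothesis (2)) gives $fa=a$. Your explicit matching of the numbered alternatives is a detail the paper leaves implicit, but the argument is the same.
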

\begin{proof}
By Lemma~\ref{phircontmaplem}, the orbit $\langle f^nx_0\rangle_{n\in\bN}$ of $x_0$ under $f$ is a Cauchy sequence with $\lim_{m,n\to\infty} p(f^mx_0,f^nx_0) = r$. Since $(X, p)$ is orbitally complete for $f$, there is an element $a\in X$ such that $a$ is a special limit of $\langle f^nx_0\rangle_{n\in\bN}$. By Definition~\ref{cauchylimit}, we have
$$p(a, a) = \lim_{m,n\to\infty} p(f^mx_0, f^nx_0) = r.$$
Finally, by Theorem~\ref{cauchyorbcontnexpfpt}, we have $fa = a$, i.e., a is a fixed point of $f$.
\end{proof}

An analogous proof using Theorem~\ref{strongcauchyorbcontnexpfpt} instead of Theorem~\ref{cauchyorbcontnexpfpt} then gives the following.
\begin{thm}
Let $(X, p)$ be a strong partial metric space, $r\in\bR$, $x_0\in X$, $\phi:[r, \infty)\to [0, \infty)$ be a continuous and non-decreasing function with $\phi(r ) = 0$ and $\phi(t) > 0$ for all $t > r$, and $f:X\to X$ be a map such that $f$ is orbitally $\phi_r$-contractive at $x_0$ and $(X, p)$ is orbitally complete for $f$. Further assume that one of the following holds:
\begin{enumerate}
\item $f$ is non-expansive;
\item $f$ is orbitally continuous at $x_0$.
\end{enumerate}
Then there exists $a\in X$ such that $fa = a$ and $p(a, a) = r$.
\end{thm}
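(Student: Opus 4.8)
The plan is to obtain this as a direct corollary of Lemma~\ref{phircontmaplem} and Theorem~\ref{strongcauchyorbcontnexpfpt}, exactly mirroring how Theorem~\ref{phircontmapthm} was derived from Lemma~\ref{phircontmaplem} and Theorem~\ref{cauchyorbcontnexpfpt}. First I would invoke Lemma~\ref{phircontmaplem}: since $f$ is orbitally $\phi_r$-contractive at $x_0$, the orbit $\langle f^n x_0\rangle_{n\in\bN}$ is a Cauchy sequence in $(X,p)$ with $\lim_{m,n\to\infty} p(f^m x_0, f^n x_0) = r$. In particular, $f$ is Cauchy at $x_0$ in the sense of Definition~\ref{cauchyatx0}.

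Next, because $(X,p)$ is assumed orbitally complete for $f$, the Cauchy sequence $\langle f^n x_0\rangle_{n\in\bN}$ has a special limit $a\in X$; so $f$ is Cauchy at $x_0$ with special limit $a$. By Definition~\ref{cauchylimit}, the value of the special limit gives
$$p(a,a) = \lim_{m,n\to\infty} p(f^m x_0, f^n x_0) = r,$$
which already secures the self-distance assertion $p(a,a) = r$.

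Finally, I would apply Theorem~\ref{strongcauchyorbcontnexpfpt} to the strong partial metric space $(X,p)$, the point $x_0$, and the map $f$ (which is Cauchy at $x_0$ with special limit $a$). Hypothesis (1) of the present theorem is precisely hypothesis (1) of Theorem~\ref{strongcauchyorbcontnexpfpt}; and hypothesis (2) here, that $f$ is orbitally continuous at $x_0$ (i.e., at $x_0$ for every $z\in X$, cf.\ Definition~\ref{orbcont}), in particular gives orbital continuity at $x_0$ for $a$, which is hypothesis (2) of Theorem~\ref{strongcauchyorbcontnexpfpt}. Either way, that theorem yields $fa = a$, so $a$ is the desired fixed point with $p(a,a) = r$.

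There is essentially no obstacle here: all the real work has been done in Lemma~\ref{phircontmaplem} (establishing the Cauchy property and the limiting value $r$) and in Theorem~\ref{strongcauchyorbcontnexpfpt} (turning the special limit into a fixed point using condition {\bf(sssd)}). The only point requiring a moment's care is the bookkeeping that hypothesis (2) of this theorem, stated as orbital continuity at $x_0$, subsumes the weaker "at $x_0$ for $a$" form needed to apply Theorem~\ref{strongcauchyorbcontnexpfpt}.
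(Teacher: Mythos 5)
Your proposal is correct and is exactly the argument the paper intends: the paper states this theorem with the one-line justification that it follows by "an analogous proof using Theorem~\ref{strongcauchyorbcontnexpfpt} instead of Theorem~\ref{cauchyorbcontnexpfpt}," which is precisely your combination of Lemma~\ref{phircontmaplem}, orbital completeness, and the strong Cauchy Mapping Theorem. Your bookkeeping remark that orbital continuity at $x_0$ subsumes the "at $x_0$ for $a$" form is also the right (and only) point of care.
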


\section{Variations}
In this final section, we apply our results to give alternate proofs to two other known fixed point theorems \cite{EKIE, LBC} in the special case when the underlying space is a strong partial metric space. In this context, we see that the use of Theorem~\ref{strongcauchyorbcontractivefpt} makes the proofs of these theorems non-repetitive and more streamlined. We start with the following theorem.

\begin{thm}
Let $(X, p)$ be a strong partial metric space bounded below by zero and $f:X\to X$ be an orbitally continuous map such that $(X, p)$ is orbitally complete for $f$. If there is some $0 < c < 1$ such that
$$\min\{p(fx, fy),\; p(x, fx),\; p(y, fy)\} \le c\,p(x, y)$$
for all $x, y \in X$, then the sequence $\langle f^nx\rangle_{n\in\bN}$ converges to a fixed point of $f$ with self distance 0, for every $x\in X$.
\end{thm}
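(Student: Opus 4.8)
The plan is to show that the displayed contractive-type inequality forces $f$ to be orbitally $0$-contractive at every point of $X$, and then to invoke Theorem~\ref{strongcauchyorbcontractivefpt} with $r=0$.

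First I would fix $x\in X$, set $x_n:=f^nx$ and $d_n:=p(x_{n+1},x_n)$ for $n\in\bN$, and record that $d_n\ge 0$ since $(X,p)$ is bounded below by zero. Applying the hypothesis to the pair $(x_n,x_{n+1})$ and simplifying $p(fx_n,fx_{n+1})=p(x_{n+1},x_{n+2})=d_{n+1}$, $p(x_n,fx_n)=d_n$, and $p(x_{n+1},fx_{n+1})=d_{n+1}$ yields
$$\min\{d_{n+1},\,d_n\}\;\le\;c\,d_n\qquad\text{for every }n\in\bN.$$
The next step is to promote this to $d_{n+1}\le c\,d_n$ for every $n$. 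If $d_n>0$, then $c\,d_n<d_n$, so the minimum on the left cannot equal $d_n$ and hence must be $d_{n+1}\le c\,d_n$. If $d_n=0$, then $x_n=x_{n+1}$: indeed $p(x_n,x_n)\ge 0$ by the lower bound, while $({\bf sssd})$ would force $p(x_n,x_n)<p(x_n,x_{n+1})=0$ if $x_n\ne x_{n+1}$, which is impossible; so $fx_n=x_n$, the orbit is constant from the $n$-th term on, $p(x_n,x_n)=0$, and $d_{n+1}=0$, making $d_{n+1}\le c\,d_n$ trivial. Iterating gives $d_n\le c^n d_0$ for all $n$, and since $d_0=p(fx,x)\ge 0$ this is exactly $p(f^{n+2}x,f^{n+1}x)\le 0+c^{n+1}|p(fx,x)|$; together with $0\le p(f^nx,f^nx)$, this is the statement that $f$ is orbitally $0$-contractive at $x$ in the sense of Definition~\ref{selfcontmap} (with the same constant $c$).

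Then I would conclude as follows. Since $f$ is orbitally continuous, it is in particular orbitally continuous at $x$, so Theorem~\ref{strongcauchyorbcontractivefpt} applies with $r=0$ through its condition (2): there is $a\in X$ with $fa=a$ and $p(a,a)=0$, and $a$ is the special limit of $\langle f^nx\rangle_{n\in\bN}$. Being a special limit, $a$ is in particular a limit, i.e.\ $\lim_{n\to\infty}p(a,f^nx)=p(a,a)$, which is convergence of the orbit to $a$ in $\tau[p]$. As $x\in X$ was arbitrary, this gives the claim for every $x$.

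I do not anticipate a real obstacle; the one delicate point is the case $d_n=0$ above, where one must use both $({\bf sssd})$ and the lower bound $0$ to see that a vanishing consecutive distance already yields a fixed point with self-distance zero. Everything else reduces to the telescoping estimate already packaged in Lemma~\ref{orbcontrCauchy} and to Theorem~\ref{strongcauchyorbcontractivefpt}.
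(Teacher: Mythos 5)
Your proposal is correct and follows essentially the same route as the paper: apply the hypothesis along the orbit to show $f$ is orbitally $0$-contractive at each point, then invoke Theorem~\ref{strongcauchyorbcontractivefpt} with $r=0$ via orbital continuity. The only (harmless) difference is that you fold the case $d_n=0$ into the inductive estimate $d_{n+1}\le c\,d_n$, whereas the paper dispatches it separately as an immediate fixed point.
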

\begin{proof}
Pick any $x_0\in X$. We first show that $f$ is orbitally 0-contractive at $x_0:$ consider the orbit $\langle f^nx_0\rangle_{n\in\bN}$ of $x_0$ under $f$. Set $x_n := f^nx_0$, for $n\in\bN$. If $p(x_n, x_{n+1}) = 0$ for some $n$, then $x_n = x_{n+1}$, and consequently, $x_m = x_n$ for all $m\ge n$. Moreover, $p(x_n, x_n) = p(x_n, x_{n+1}) = 0$. Hence, we have got our result. So assume without loss of generality that $p(x_n, x_{n+1})\not= 0$ for all $n\in\bN$. Letting $x = x_n$ and $y = x_{n+1}$ in the given condition for $f$, we get
\begin{eqnarray*}
\min\{p(x_{n+1}, x_{n+2}),\; p(x_n, x_{n+1}),\; p(x_{n+1}, x_{n+2})\} & \le & c\,p(x_n, x_{n+1}).
\end{eqnarray*}
If $\min\{p(x_{n+1}, x_{n+2}),\; p(x_n, x_{n+1})\} = p(x_n, x_{n+1})$, then it implies
$$p(x_n, x_{n+1}) \le c\,p(x_n, x_{n+1}) < p(x_n, x_{n+1}),$$
which is absurd. Hence, $\min\{p(x_{n+1}, x_{n+2}),\; p(x_n, x_{n+1})\} = p(x_{n+1}, x_{n+2})$, and therefore,
$$p(x_{n+1}, x_{n+2})\le c\,p(x_n, x_{n+1}).$$
Thus, $f$ is orbitally 0-contractive at $x_0$. The rest follows by Theorem~\ref{strongcauchyorbcontractivefpt}.
\end{proof}

Now we give our second application.
\begin{thm}
Let $(X, p)$ be a strong partial metric space bounded below by zero and $f:X\to X$ be an orbitally continuous map such that $(X, p)$ is orbitally complete for $f$. If there is some $0 < c < 1$ such that
$$\dfrac{\min\{p(fx, fy)p(x, y),\; p(x, fx)p(y, fy)\}}{\min\{p(x, fx),\; p(y, fy)\}} \le c\,p(x, y)$$
for all $x, y \in X$ such that $p(x, fx)\not= 0$ and $p(y, fy)\not= 0$, then the sequence $\langle f^nx\rangle_{n\in\bN}$ converges to a fixed point of $f$ with self distance 0, for every $x\in X$.
\end{thm}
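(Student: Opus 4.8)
The plan is to mimic the proof of the previous theorem: fix an arbitrary $x_0\in X$, write $x_n := f^nx_0$, and show that $f$ is orbitally $0$-contractive at $x_0$ in the sense of Definition~\ref{selfcontmap}. The conclusion then follows from Theorem~\ref{strongcauchyorbcontractivefpt} (with $r=0$), since $f$ is orbitally continuous (hence orbitally continuous at $x_0$) and $(X,p)$ is orbitally complete for $f$; note that the special limit produced by that theorem is in particular a limit, so the orbit $\langle f^nx_0\rangle_{n\in\bN}$ converges to the fixed point obtained, which has self-distance $0$. Since $x_0$ is arbitrary, this gives the full statement.

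First I would dispatch the degenerate case. If $p(x_n,x_{n+1})=0$ for some $n$, then the lower bound $0$ together with {\bf(ssd)} and {\bf(sym)} forces $p(x_n,x_n)=p(x_n,x_{n+1})=p(x_{n+1},x_{n+1})=0$, so {\bf(sep)} gives $x_n=x_{n+1}$; iterating, $x_m=x_n$ for all $m\ge n$, the orbit is eventually constant, converges, and its limit $x_n$ is a fixed point with self-distance $0$. Hence one may assume $p(x_n,x_{n+1})>0$ for all $n$ (positivity being supplied by the lower bound).

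Next, in this nondegenerate case, I would apply the hypothesis with $x=x_n$ and $y=x_{n+1}$, which is legitimate because $p(x_n,fx_n)=p(x_n,x_{n+1})\ne 0$ and $p(x_{n+1},fx_{n+1})=p(x_{n+1},x_{n+2})\ne 0$. Using $fx_n=x_{n+1}$ and $fx_{n+1}=x_{n+2}$, the numerator $\min\{p(fx_n,fx_{n+1})\,p(x_n,x_{n+1}),\ p(x_n,fx_n)\,p(x_{n+1},fx_{n+1})\}$ collapses to $p(x_n,x_{n+1})\,p(x_{n+1},x_{n+2})$, and dividing by $\min\{p(x_n,x_{n+1}),\ p(x_{n+1},x_{n+2})\}$ (which is strictly positive, so the rearrangement is valid) leaves either $p(x_{n+1},x_{n+2})$ or $p(x_n,x_{n+1})$. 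The latter alternative would give $p(x_n,x_{n+1})\le c\,p(x_n,x_{n+1})<p(x_n,x_{n+1})$, absurd since $p(x_n,x_{n+1})>0$; hence $p(x_{n+1},x_{n+2})\le c\,p(x_n,x_{n+1})$ for every $n$. An easy induction then yields $p(f^{n+2}x_0,f^{n+1}x_0)=p(x_{n+1},x_{n+2})\le c^{\,n+1}p(x_0,x_1)=c^{\,n+1}|p(fx_0,x_0)|$, while $p(f^nx_0,f^nx_0)\ge 0$ by the lower bound condition. Thus $f$ is orbitally $0$-contractive at $x_0$, and Theorem~\ref{strongcauchyorbcontractivefpt} completes the argument.

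The only genuine obstacle is the bookkeeping with the min-of-products expression together with the need for the denominator $\min\{p(x_n,fx_n),\ p(x_{n+1},fx_{n+1})\}$ to be strictly positive before the displayed inequality can be rearranged; this is precisely why one isolates at the outset the case where some consecutive distance vanishes. Beyond that, the proof is a routine adaptation of the preceding theorem's argument.
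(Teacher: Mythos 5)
Your proof is correct and follows essentially the same route as the paper's: isolate the degenerate case where some consecutive orbital distance vanishes, otherwise apply the hypothesis with $x=x_n$, $y=x_{n+1}$, rule out the alternative $\min\{p(x_n,x_{n+1}),\,p(x_{n+1},x_{n+2})\}=p(x_{n+1},x_{n+2})$ by the same contradiction, deduce $p(x_{n+1},x_{n+2})\le c\,p(x_n,x_{n+1})$ and hence orbital $0$-contractivity, and conclude via Theorem~\ref{strongcauchyorbcontractivefpt}. The only difference is that you spell out the induction verifying Definition~\ref{selfcontmap} and the positivity of the denominator, details the paper leaves implicit.
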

\begin{proof}
Pick any $x_0\in X$. We start by showing that $f$ is orbitally 0-contractive at $x_0:$ consider the orbit $\langle f^nx_0\rangle_{n\in\bN}$ of $x_0$ under $f$. Set $x_n := f^nx_0$, for $n\in\bN$. If $p(x_n, x_{n+1}) = 0$ for some $n$, then $x_n = x_{n+1}$, and consequently, $x_m = x_n$ for all $m\ge n$. Moreover, $p(x_n, x_n) = p(x_n, x_{n+1}) = 0$. Hence, we have got our result. So assume without loss of generality that $p(x_n, x_{n+1})\not= 0$ for all $n\in\bN$. Letting $x = x_n$ and $y = x_{n+1}$ in the given condition for $f$, we get
\begin{eqnarray*}
& \dfrac{\min\{p(x_{n+1}, x_{n+2})p(x_n, x_{n+1}),\; p(x_n, x_{n+1})p(x_{n+1}, x_{n+2})\}}{\min\{p(x_n, x_{n+1}),\; p(x_{n+1}, x_{n+2})\}} & \le c\,p(x_n, x_{n+1}) \\
\implies & \dfrac{p(x_{n+1}, x_{n+2})p(x_n, x_{n+1})}{\min\{p(x_n, x_{n+1}),\; p(x_{n+1}, x_{n+2})\}} & \le c\,p(x_n, x_{n+1}).
\end{eqnarray*}
If $\min\{p(x_n, x_{n+1}),\; p(x_{n+1}, x_{n+2})\} = p(x_{n+1}, x_{n+2})$, then it implies
$$p(x_n, x_{n+1}) \le c\,p(x_n, x_{n+1}) < p(x_n, x_{n+1}),$$
which is absurd. Hence, $\min\{p(x_n, x_{n+1}),\; p(x_{n+1}, x_{n+2})\} = p(x_n, x_{n+1})$, and therefore,
$$p(x_{n+1}, x_{n+2})\le c\,p(x_n, x_{n+1}).$$
Thus, $f$ is orbitally 0-contractive at $x_0$. The rest follows by Theorem~\ref{strongcauchyorbcontractivefpt}.
\end{proof}

\bibliographystyle{amsplain}
\bibliography{references}

\end{document}